\documentclass[a4paper,reqno,12pt,oneside]{amsart}
\usepackage{a4wide}
\usepackage[english]{babel}    
\usepackage[bibliography=common]{apxproof}
\usepackage[ruled, linesnumbered]{algorithm2e}
\usepackage[latin1]{inputenc} 
\usepackage[sorting=none, style=numeric]{biblatex}
\usepackage[page]{appendix}
\usepackage{graphicx}
\usepackage{tikz}
\usepackage{subfig}
\usetikzlibrary{positioning}
\addbibresource{References.bib}
\newtheorem{definition}{Definition}[section]
\newtheorem{proposition}{Proposition}[section]
\newtheorem{theorem}{Theorem}[section]
\newtheorem{corollary}{Corollary}[theorem]
\theoremstyle{remark}
\newtheorem*{remark}{Remark}
\newtheorem{lemma}{Lemma}[section]
\usepackage[centertags]{amsmath}
\usepackage{indentfirst,amsfonts,amssymb,amsthm,newlfont}
\makeatletter

\title[The Bell-Touchard counting process]{The Bell-Touchard counting process}

\author[Thomas Freud]{Thomas Freud}
\email{thfreud@gmail.com}
\author[Pablo M. Rodriguez ]{Pablo M. Rodriguez}

\address{Centro de Ci\^encias Exatas e da Natureza, Universidade Federal de Pernambuco, Av. Prof. Moraes Rego, 1235 - Cidade Universit\'aria - Recife - PE, Brazil.}
\email{pablo@de.ufpe.br}

\begin{document}

\subjclass[2020]{60G55, 60G51}
\keywords{Multiple Poisson Process, Compound Poisson Process, Bell-Touchard distribution}

\maketitle

\begin{abstract}The Poisson process is one of the simplest stochastic processes defined in continuous time, having interesting mathematical properties, leading, in many situations, to applications mathematically treatable. One of the limitations of the Poisson process is the rare events hypothesis; which is the hypothesis of unitary jumps within an infinitesimal window of time. Although that restriction may be avoided by the compound Poisson process, in most situations, we don't have a closed expression for the probability distribution of the increments of such processes, leaving us options such as working with probability generating functions, numerical analysis and simulations. It is with this motivation in mind, inspired by the recent developments of discrete distributions, that we propose a new counting process based on the Bell-Touchard probability distribution, naming it the Bell-Touchard process. We verify that the process is a compound Poisson process, a multiple Poisson process and that it is closed for convolution plus decomposition operations. Besides, we show that the Bell-Touchard process arises naturally from the composition of two Poisson processes. Moreover, we propose two generalizations; namely, the compound Bell-Touchard process and the non-homogeneous Bell-Touchard process, showing that the last one arises from the composition of a non-homogeneous Poisson process along with a homogeneous Poisson process. We emphasize that since previous works have been shown that the Bell-Touchard probability distribution can be used quite effectively for modelling count data, the Bell-Touchard process and its generalizations may contribute to the formulation of mathematical treatable models where the rare events hypothesis is not suitable.
\end{abstract}

\section{Introduction}
The Poisson process is one of the simplest stochastic processes defined in continuous time, having interesting mathematical properties, leading, in many situations, to applications mathematically treatable. It is widely explored in the literature, having many books approaching it, such as \cite{kingman1992poisson,last2017lectures}. One can check the properties of this process and some applications in queuing theory and biology in \cite{ross2010introduction,schinazi2014classical}. In actuarial science, one applies the Poisson process to the classical risk process\cite{asmussen2010ruin,buhlmann2007mathematical}. There are some generalizations of the Poisson process one gets relaxing its original hypotheses. If one drops the stationary and independent increments hypothesis, one obtains the non-homogeneous Poisson process. On the other hand, releasing the assumption of unitary jumps within a small interval leads to, for example, a compound Poisson process. We also have the spatial Poisson process, defined in more than one dimension\cite{ferrari1997acoplamento}\cite[Chapter~11]{privault2013understanding}, which agrees with the original Poisson process in the real line.

The compound Poisson process is one of the well-known generalizations of the Poisson process, having lots of applications. In queuing theory, it appears naturally in batch arriving queue models\cite[Chapter~6]{balakrishnan2007statistics}. In actuarial finance, it comes forth both in ruin theory as a part of the Crame\'r-Lundberg process and its generalizations, see \cite[Chapter~4]{asmussen2010ruin}, as well as in stochastic interest modelling\cite{li2017stochastic}.

The Poisson distribution and the compound Poisson process are related to the Bell polynomials, which, for any $n \in \mathbb{N}_{0}:=\mathbb{N}\cup \{0\}$ and $x \in \mathbb{R}$, are defined as:
    \begin{equation}
        B_{n}(x) = e^{-x}\sum_{k=0}^{\infty}\frac{k^n}{k!}x^k.
        \label{eq:dobinski_formula}
    \end{equation}

The Bell polynomials occur in combinatorics when one deals with the study of set partitions. They also come about in the mathematical analysis, in the successive differentiation of composite functions\cite{boyadzhiev2009exponential,noschese2003differentiation}. Concerning the Poisson distribution, one has that its moment generating function is the same as the generating function of the complete Bell polynomials. That indicates equivalence between those polynomials and the moments of the Poisson distribution. This generating function is given by:
    \begin{equation}
            \varphi_{x}(\theta) := e^{x(e^{\theta}-1)} = \sum_{n=0}^{\infty}B_{n}(x)\frac{\theta^{n}}{n!}, \quad\text{ for } \theta>0,
            \label{eq:bell_generating_function}
    \end{equation}
where, differentiating \eqref{eq:bell_generating_function} successively and evaluating each derivative at $\theta=0$, one recovers the Bell polynomials. Moreover, expanding the left-hand side of \eqref{eq:bell_generating_function} and comparing each term with the right-hand side, one gets \eqref{eq:dobinski_formula}. Furthermore, the Bell polynomials have many applications in Probability Theory. Recently, \cite{kataria2021probabilistic} proposed an association between these polynomials and the weighted sum of independent Poisson random variables. Besides, the iterated Poisson process obtained from the composition of Poisson processes has the distribution of increments provided as a function of the Bell polynomials\cite{orsingher2012compositions}. The asymptotic properties of the Bell polynomials involving random graphs are explored in \cite{khorunzhiy2020asymptotic}. Probabilistic characteristics of these polynomials are employed by \cite{eger2016identities} in order to demonstrate some properties of the partial Bell polynomials.


One of the Poisson process assumptions specifies that the size of the jumps is unitary. However, \cite{janossy1950composed} relaxes this hypothesis, bringing about the multiple Poisson process. Additionally, this process has a distribution of increments given by the so called composed Poisson distribution, which is parametrized by a non-negative convergent sequence $\{c_{n}\}_{n\geq1}$. Considering a multiple Poisson process $\{\xi(t), \,t\geq 0\}$ depending on the non-negative sequence ${\{c_{n}\}_{n\geq1}}$ where $\sum c_n<\infty$ one can write the following:
	\begin{equation}
	   \xi(t) = \sum_{n=1}^{\infty} n\xi_n(t),
	  \label{eq:MultiplePoisson}
	\end{equation}
where $\{ \xi_{n}\}_{n \geq 1}$ is a sequence of independent Poisson processes with rate $c_{n}$ for all $n\in \mathbb{N}$. In other words, the multiple Poisson processes correspond to the weighted sum of an infinity number of Poisson processes, where the random variable $\xi(t)$ counts the number of events within the interval $[0,t)$. Moreover, it has probability distribution given by (see \cite{janossy1950composed}):
    \begin{equation*}
	        \Pr[\xi(t)=k] =\exp{\left \{-\sum_{n=1}^{\infty}tc_n \right \}} \sum_{r_1+2r_2+\dots+kr_k=k}\frac{(tc_1)^{r_1}(tc_2)^{r_2} \dots (tc_k)^{r_k}}{r_1!r_2!\dots r_k!}.
	        \label{eq:poisson_composed_family}
	\end{equation*}

The Bell distribution, proposed by \cite{castellares2018bell}, is a member of the composed Poisson family. It has probability distribution given by:
     \begin{equation*}
        \Pr[X=x] = e^{-(e^{\theta}-1)}\frac{\theta^{x}}{x!}B_{x}, \quad x \in \mathbb{N}_{0}\text{ and } \theta>0,
    \end{equation*}
where $B_{n}:=B_{n}(1)$ is the $\text{n}^\text{th}$ Bell number. The Bell distribution has interesting properties, such as being uniparametric, infinitely divisible, and a member of the exponential family of distributions \cite{castellares2018bell}. The convolution of independent Bell random variables follows the Bell-Touchard distribution, which was proposed by \cite{castellares2020two}, having the following probability distribution:
  \begin{equation*}
     \Pr[X=x] = e^{-\alpha(e^{\theta}-1)}\frac{\theta^{x}}{x!}B_{x}(\alpha), \quad x \in \mathbb{N}_{0}\text{ and } \theta,\alpha>0,
     \label{eq:bell_touchard_distribution}
 \end{equation*}
where $B_{n}(x)$ is the $\text{n}^\text{th}$ single variable Bell polynomial. This is equivalent to the Touchard polynomial, hence the proposed name for the distribution. Among its properties, this distribution is a member of the composed Poisson family, closed for convolutions and is infinitely divisible. Subsequently, \cite{bhatifamily} has proposed a distribution that generalizes the Bell-Touchard distribution taking the Taylor series expansion of the $r$-Bell polynomials generating function. It is the three parameters $r$-Bell distribution. Its probability mass function is given by:
    \begin{equation*}
         \Pr[X=x] = e^{-\alpha(e^{\theta}-1)-r\theta}\frac{\theta^{x}}{x!}B_{x,r}(\alpha), \quad x\in \mathbb{N}_{0}\text{ and } \theta,\alpha, r>0,
     \end{equation*}
     where
     \begin{equation*}
         B_{x,r}(\alpha) =\frac{1}{e^{\alpha}}\sum_{k=0}^{\infty}\frac{(k+r)^{x}}{k!}\alpha^{k}.
     \end{equation*}
The generating function of the $r$-Bell polynomials is defined by
     \begin{equation*}
         \varrho_{x}(\theta):=e^{x(e^{\theta}-1)+r\theta} = \sum_{n=0}^{\infty}B_{n,r}(x)\frac{\theta^{n}}{n!}.
     \end{equation*}
Considering two independent random variables, the first one following a Bell-Touchard distribution and the second one following a Poisson distribution, its sum follows the $r$-Bell distribution. The $r$-Bell is the same as the Short distribution\cite{irwin1964personal,kemp1967contagious}. In its turn, the Bell-Touchard distribution is equivalent to the Neyman Type A distribution, with $\alpha:=\lambda e^{-\theta}$ and $\theta$. That can be observed from the probability mass function of a Neyman Type A distribution, which is given by:
    \begin{equation*}
         \Pr[X=x] = \theta^{x}\frac{e^{-\lambda}}{x!}\sum_{k=0}^{\infty}\frac{(\lambda e^{-\theta})^{k}}{k!}k^{x}, \text{ } \theta>0, x\in \mathbb{N}_0.
     \end{equation*}
By using \eqref{eq:dobinski_formula} one gets
        \begin{equation*}
            \Pr[X=x] = e^{-\lambda e^{-\theta}(e^{\lambda}-1)}\frac{\theta^{x}}{x!}B_{x}(\lambda e^{-\theta}), \text{ } \theta>0, x\in \mathbb{N}_0,
        \end{equation*}
which is the Bell-Touchard probability mass function.

Since the distributions we have covered here are members of the composed Poisson family, one can write these random variables as \eqref{eq:MultiplePoisson}, making a suitable choice of the terms in the sequence ${\{c_{n}\}_{n\geq1}}$. Besides, these distributions are particular cases of the compound Poisson distribution. This fact is a consequence of the connection between the Bell polynomials and the compound Poisson family\cite{rota1973foundations}.

It has been shown that these new probability distributions can be used quite effectively for modeling count data, see for example \cite{castellares2020two,bhatifamily}. With this motivation in mind, and inspired in the mentioned works, we propose and investigate a counting process whose underlying distribution is Bell-Touchard with parameters $\alpha, \theta$, exploring some of its major properties. We point out that the new process, that we call the Bell-Touchard process, and its generalizations, may contribute with the formulation of mathematical treatable models where the rare events hypothesis is not suitable.

The paper is organized as follows. Section 2 is devoted to preliminary definitions and results related to the Bell polynomials and the Bell-Touchard distribution. The formulation of the Bell-Touchard process as well as its main properties and generalizations are presented in Section 3. In Section 4 we add some numerical results to illustrate the applicability of the process, and we finish the paper with a Concluding Remarks Section.


\section{Preliminaries}

\subsection{Bell polynomials}
In this section, we recap some properties of the Bell polynomials. Other properties can be checked in \cite{bell1934exponential, bell1934polynomials,mihoubi2008bell, san2015some}. For any $k \in \mathbb{N}_{0}$ and $x \in \mathbb{R}$, the single variable Bell polynomials can be expressed by the Fa\`a di Bruno's formula, besides \eqref{eq:dobinski_formula}:
\begin{equation}
        B_0(x):=1, \quad B_k(x) := \sum_{r_1+2r_2+\dots+kr_k=k}\frac{k!}{r_1!r_2!\cdots r_k!} \frac{x^{r_1+r_2+\cdots + r_k}}{(1!)^{r_1}(2!)^{r_2}\cdots (k!)^{r_k}}.
        \label{eq:exp_bell_polynomial2}
    \end{equation}
    
Let $\partial_{\theta}^{n} := d^{n}/d\theta^{n}$ be the differential operator. Taking \eqref{eq:bell_generating_function} and performing $\partial_{\theta}^{n}$ one gets 
    \begin{equation}
        \begin{aligned}[b]
            \partial_{\theta}^{k}\varphi_{x}(\theta) &= \sum_{n=k}^{\infty}B_{n}(x)n(n-1)(n-2)\cdots(n-k+1)\frac{\theta^{n-k}}{n!}\\
            &=\sum_{n=0}^{\infty}B_{n+k}(x)(n+k)(n+k-1)(n+k-2)\cdots(n+1)\frac{\theta^{n}}{(n+k)!}\\
            &=\sum_{n=0}^{\infty}B_{n+k}(x)\frac{\theta^{n}}{n!}.
        \end{aligned}
        \label{eq:shifted_bell_polynomial_generator}
    \end{equation}
We called \eqref{eq:shifted_bell_polynomial_generator} the $k$-shifted Bell polynomials generating function.

Considering the function $f(\theta)=e^{xe^{\theta}}$ and its Taylor series expansion
    \begin{equation*}
        e^{xe^{\theta}} = \sum_{k=0}^{\infty}\frac{(xe^{\theta})^{k}}{k!},
    \end{equation*}
and differentiating both sides of this expression with relation to $\theta$ one gets
    \begin{equation*}
    e^{xe^{\theta}}xe^{\theta} = \sum_{k=0}^{\infty}\frac{k(xe^{\theta})^{k}}{k!}.
    \end{equation*}
Taking the second derivative with relation to $\theta$ in both sides one gets
    \begin{equation*}
        e^{xe^{\theta}}[(xe^{\theta})^{2}+xe^{\theta}] = \sum_{k=0}^{\infty}\frac{k^{2}(xe^{\theta})^{k}}{k!}.
    \end{equation*}
Keeping this fashion until the n$^\text{th}$ derivative, one can write the left-hand side of the last expression as a product of the function $f(\theta)$ with the polynomial $p_n(xe^{\theta})$.
    \begin{equation}
        e^{xe^{\theta}}p_{n}(xe^{\theta}) = \sum_{k=0}^{\infty}\frac{k^{n}(xe^{\theta})^{k}}{k!}
        \label{eq:bell_differential_deduction}
    \end{equation}
Now, comparing the right-hand side of \eqref{eq:bell_differential_deduction} with \eqref{eq:dobinski_formula}, one has that $p_n(xe^{\theta}) = B_{n}(xe^{\theta})$ leading us to 
    \begin{equation}
        \partial_{\theta}^{n}e^{xe^{\theta}}=B_{n}(xe^{\theta})e^{xe^{\theta}}.
        \label{eq:iterated_exponential_identity}
    \end{equation}

Starting from \eqref{eq:bell_generating_function}, writing $\varphi_{x}(\theta) = e^{x(e^{\theta}-1)} =e^{xe^{\theta}}e^{-x}$ and using \eqref{eq:iterated_exponential_identity} we have that
    \begin{equation}
        \partial_{\theta}^{k}\varphi_{x}(\theta) = B_{k}(xe^{\theta})\varphi_{x}(\theta),\quad\text{ (see \cite{boyadzhiev2009exponential})}.
        \label{eq:diff_bell_generating}
    \end{equation}
It follows as a consequence of \eqref{eq:diff_bell_generating} and \eqref{eq:shifted_bell_polynomial_generator} that
    \begin{equation}
        B_{k}(xe^{\theta})\varphi_{x}(\theta) = \sum_{n=0}^{\infty}B_{n+k}(x)\frac{\theta^{n}}{n!}.
        \label{eq:bell_differential_identity}
    \end{equation}

At last, we recall that the Bell polynomials are sequences of binomial type. That is, $B_0(x)=1$, $B_n(0)=0$ for all $n$ and 
    \begin{equation}
        B_{n}(x+y) = \sum_{l=0}^{n}\binom{n}{l}B_{l}(x)B_{n-l}(y), \text{ for } n\geq 0.
        \label{eq:binomial_identity}
    \end{equation}
    
The existence of a connection between polynomials of binomial type and the compound Poisson process is pointed by \cite{rota1973foundations}. Lately, that connection was explored by \cite{stam1988polynomials}.

\subsection{Bell-Touchard distribution}

In this section, we take a look at some properties of the Bell-Touchard distribution covered in \cite{castellares2020two}. Although the proofs of these results are easy we include it for the sake of completeness. 

\begin{definition}
    A discrete random variable $Y$ has a Bell-Touchard distribution with parameters $(\alpha,\theta) \in \mathbb{R}_{+}^{2}$ if its probability mass function is given by 
    \begin{equation}
     \Pr[Y=x] = e^{-\alpha(e^{\theta}-1)}\frac{\theta^{x}}{x!}B_{x}(\alpha), \quad x \in \mathbb{N}_{0},
     \label{eq:bell_touchard_distribution}
 \end{equation}
 where $B_{n}(\alpha)$ is the $\text{n}^\text{th}$ single variable Bell polynomial.
\end{definition}
If $Y$ follows a Bell-Touchard distribution with parameters $(\alpha,\theta)$ then we use the notation $Y \thicksim BT(\alpha,\theta)$ in order to indicate this fact.
\begin{proposition}
    Let $Y \thicksim BT(\alpha,\theta)$. Then the probability generating function of $Y$ is given by
    \begin{equation}
        G_{Y}(s)=\exp\{\alpha(e^{s\theta}-e^{\theta})\}, \quad |s|<1.
        \label{eq:bell_probability_generating_function}
    \end{equation}
    \label{prop:probability_generating_function}
\end{proposition}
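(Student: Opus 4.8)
The plan is to evaluate the probability generating function $G_{Y}(s)=\mathbb{E}[s^{Y}]=\sum_{x=0}^{\infty}s^{x}\Pr[Y=x]$ directly from the probability mass function \eqref{eq:bell_touchard_distribution}, and then to recognise the resulting power series as an instance of the Bell generating function \eqref{eq:bell_generating_function}.

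Concretely, I would first substitute \eqref{eq:bell_touchard_distribution} and factor the constant $e^{-\alpha(e^{\theta}-1)}$ out of the sum, obtaining
\[
G_{Y}(s)=e^{-\alpha(e^{\theta}-1)}\sum_{x=0}^{\infty}\frac{(s\theta)^{x}}{x!}B_{x}(\alpha).
\]
Next, I would apply \eqref{eq:bell_generating_function} with its polynomial argument taken to be $\alpha$ and its series variable taken to be $s\theta$, which identifies the remaining sum with $\varphi_{\alpha}(s\theta)=e^{\alpha(e^{s\theta}-1)}$. Combining the two exponentials then yields
\[
G_{Y}(s)=e^{-\alpha(e^{\theta}-1)}\,e^{\alpha(e^{s\theta}-1)}=\exp\{\alpha(e^{s\theta}-e^{\theta})\},
\]
which is exactly \eqref{eq:bell_probability_generating_function}.

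The computation is short, and the only point deserving a word of care is the legitimacy of the two manipulations — pulling the constant out of the sum and substituting $s\theta$ into \eqref{eq:bell_generating_function}. Both are justified because the power series $\sum_{n\geq 0}B_{n}(\alpha)z^{n}/n!$ has infinite radius of convergence, as is visible from Dobi\'nski's formula \eqref{eq:dobinski_formula} which writes $B_{n}(\alpha)=e^{-\alpha}\sum_{k\geq 0}k^{n}\alpha^{k}/k!$; hence the series converges absolutely at $z=s\theta$ for every $|s|<1$ (in fact for every real $s$), and rearrangement is permitted. I do not anticipate a genuine obstacle here; as a consistency check one notes $G_{Y}(1)=e^{\alpha(e^{\theta}-e^{\theta})}=1$, which re-confirms that \eqref{eq:bell_touchard_distribution} defines a bona fide probability distribution.
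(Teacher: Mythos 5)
Your proposal is correct and follows essentially the same route as the paper: substitute the mass function \eqref{eq:bell_touchard_distribution} into $\mathbb{E}[s^{Y}]$, factor out $e^{-\alpha(e^{\theta}-1)}$, and identify the remaining series with $\varphi_{\alpha}(s\theta)$ via \eqref{eq:bell_generating_function}. The added remark on absolute convergence is a harmless extra justification not present in the paper.
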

\begin{proof}
It follows from \eqref{eq:bell_touchard_distribution}, \eqref{eq:bell_generating_function} and the definition of probability generating function that
\begin{align*}
    G_{Y}(s):= \mathbb{E}[s^{y}] &= \sum_{y=0}^{\infty}s^{y}\Pr[Y=y]\\
    &= e^{-\alpha(e^{\theta}-1)}\sum_{y=0}^{\infty}\frac{B_{y}(\alpha)(s\theta)^{y}}{y!}\\
    &= e^{-\alpha(e^{\theta}-1)}e^{\alpha(e^{s\theta}-1)}\\
    &=\exp\{\alpha(e^{s\theta}-e^{\theta})\}.
\end{align*}
    
\end{proof}
\begin{proposition}
    Let the sequence of independent random variables $\{Y_i\}_{i=1}^n$ with $Y_i \thicksim BT(\alpha_{i},\theta)$, hence   ${\sum_{i=1}^{n}Y_i} \thicksim BT(\alpha, \theta)$, with $\alpha = \sum_{i=1}^{n}\alpha_{i}$.
    \label{prop: sum_of_bt_random_variables}
\end{proposition}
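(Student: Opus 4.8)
The plan is to work with probability generating functions, exploiting Proposition \ref{prop:probability_generating_function} together with the fact that a distribution supported on $\mathbb{N}_{0}$ is uniquely determined by its pgf. First I would observe that, by independence of the $Y_{i}$, the pgf of $S_{n} := \sum_{i=1}^{n} Y_{i}$ factorizes: $G_{S_{n}}(s) = \prod_{i=1}^{n} G_{Y_{i}}(s)$ for $|s| < 1$. Substituting the closed form $G_{Y_{i}}(s) = \exp\{\alpha_{i}(e^{s\theta}-e^{\theta})\}$ supplied by Proposition \ref{prop:probability_generating_function} and collecting the exponents yields $G_{S_{n}}(s) = \exp\{(\sum_{i=1}^{n}\alpha_{i})(e^{s\theta}-e^{\theta})\} = \exp\{\alpha(e^{s\theta}-e^{\theta})\}$, which is precisely the pgf of a $BT(\alpha,\theta)$ random variable. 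By uniqueness of generating functions, $S_{n} \thicksim BT(\alpha,\theta)$.

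As an alternative I would sketch a direct convolution argument by induction on $n$. For $n=2$, writing $\Pr[Y_{1}+Y_{2}=k] = \sum_{l=0}^{k}\Pr[Y_{1}=l]\Pr[Y_{2}=k-l]$ and inserting \eqref{eq:bell_touchard_distribution}, the exponential prefactors multiply to $e^{-(\alpha_{1}+\alpha_{2})(e^{\theta}-1)}$, and the surviving sum is $\frac{\theta^{k}}{k!}\sum_{l=0}^{k}\binom{k}{l}B_{l}(\alpha_{1})B_{k-l}(\alpha_{2})$; the binomial-type identity \eqref{eq:binomial_identity} for the Bell polynomials collapses this to $\frac{\theta^{k}}{k!}B_{k}(\alpha_{1}+\alpha_{2})$, i.e. the $BT(\alpha_{1}+\alpha_{2},\theta)$ mass function. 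The inductive step is identical, pairing $\sum_{i=1}^{n-1}Y_{i} \thicksim BT(\sum_{i=1}^{n-1}\alpha_{i},\theta)$ with $Y_{n} \thicksim BT(\alpha_{n},\theta)$.

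There is no genuine obstacle here: the only delicate points are that the first route invokes uniqueness of the pgf on $\mathbb{N}_{0}$ and the second route relies on the binomial-type property \eqref{eq:binomial_identity}, both already established in the excerpt. I would favour the pgf proof for brevity, and perhaps remark that it makes transparent why the second parameter $\theta$ must be common to all summands for closure under convolution to hold.
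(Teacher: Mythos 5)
Your primary argument is correct and is essentially the paper's own proof: factor the probability generating function by independence, insert the closed form from Proposition \ref{prop:probability_generating_function}, collect the exponents, and conclude by uniqueness of the pgf. Your alternative convolution sketch via the binomial-type identity \eqref{eq:binomial_identity} is also sound and in fact mirrors the induction the paper later uses for the superposition of Bell-Touchard processes (Proposition \ref{prop:superposition_same_theta}), so nothing further is needed.
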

\begin{proof}
Taking the sequence $\{Y_i\}_{i=1}^n$, considering ${Z=\sum_{i=1}^{n}Y_i}$ and using the Proposition  \ref{prop:probability_generating_function} one has that
\begin{equation}
\begin{aligned}[b]
    G_{Z}(s) &= \prod_{i=1}^{n}G_{Y_i}(s)\\
    &=\prod_{i=1}^{n}\exp\{\alpha_{i}(e^{s\theta}-e^{\theta})\}\\
    &=\exp\{\sum_{i=1}^{n}\alpha_{i}(e^{s\theta}-e^{\theta})\}, \text{ making } \alpha = \sum_{i=1}^{n}\alpha_{i}\\
    &= \exp\{\alpha(e^{s\theta}-e^{\theta})\}.
\end{aligned}
\end{equation}
As a consequence of the uniqueness property of the probability generating function, the Bell-Touchard distribution is closed under convolution.

\end{proof}

\begin{proposition}
    If $Y \thicksim BT(\alpha,\theta)$, then the moment generating function of $Y$ is given by
    \begin{equation}
        M_{Y}(t) = \exp\{\alpha e^{\theta}[e^{\theta(e^{t}-1)}-1]\}, \quad t\in \mathbb{R}
        \label{eq:moment_generating_function}
    \end{equation}
    \label{prop:moment_generating_function}
\end{proposition}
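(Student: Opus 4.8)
The plan is to compute $M_Y(t)=\mathbb{E}[e^{tY}]$ directly from the definition, in exactly the same spirit as the computation of the probability generating function in Proposition~\ref{prop:probability_generating_function}. Writing
\[
M_Y(t)=\sum_{x=0}^{\infty}e^{tx}\Pr[Y=x]
\]
and inserting the probability mass function \eqref{eq:bell_touchard_distribution}, the constant $e^{-\alpha(e^{\theta}-1)}$ factors out of the sum, leaving $\sum_{x=0}^{\infty}B_{x}(\alpha)(\theta e^{t})^{x}/x!$. This is precisely the Bell generating function \eqref{eq:bell_generating_function} with its argument $\theta$ replaced by $\theta e^{t}$, so the remaining sum equals $e^{\alpha(e^{\theta e^{t}}-1)}$.

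Multiplying the two exponential factors then gives $M_Y(t)=\exp\{\alpha(e^{\theta e^{t}}-e^{\theta})\}$, which is just $G_Y(e^{t})$ with $G_Y$ as in \eqref{eq:bell_probability_generating_function}. The final step is the elementary rearrangement
\[
\alpha\bigl(e^{\theta e^{t}}-e^{\theta}\bigr)=\alpha e^{\theta}\bigl(e^{\theta e^{t}-\theta}-1\bigr)=\alpha e^{\theta}\bigl(e^{\theta(e^{t}-1)}-1\bigr),
\]
which yields the claimed closed form \eqref{eq:moment_generating_function}.

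There is essentially no real obstacle here; the only point worth a sentence is convergence. Since $\theta>0$ we have $\theta e^{t}>0$ for every $t\in\mathbb{R}$, and the series defining $\varphi_{\alpha}(\theta e^{t})$ in \eqref{eq:bell_generating_function} converges for all positive (indeed all real) values of its argument because $B_{x}(\alpha)=e^{-\alpha}\sum_{k\ge 0}\alpha^{k}k^{x}/k!$ grows sub-factorially in $x$. Hence the interchange of summation and expectation is legitimate on all of $\mathbb{R}$, and the moment generating function is finite everywhere, as asserted.
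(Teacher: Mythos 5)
Your proposal is correct and follows essentially the same route as the paper: expand $\mathbb{E}[e^{tY}]$ using the mass function \eqref{eq:bell_touchard_distribution}, recognize the sum as the Bell generating function \eqref{eq:bell_generating_function} evaluated at $\theta e^{t}$, and rewrite $\alpha(e^{\theta e^{t}}-e^{\theta})$ as $\alpha e^{\theta}[e^{\theta(e^{t}-1)}-1]$. The extra remark on convergence (finiteness of the series for all real $t$, justifying the interchange) is a small, correct addition that the paper leaves implicit.
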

\begin{proof}
It follows from de series expansion of Bell-Touchard probability mass function
\begin{align*}
    \mathbb{E}[e^{tY}]=M_{Y}(t) &= \sum_{y=0}^{\infty}e^{ty}\Pr[Y=y]\\
    &=e^{-\alpha(e^{\theta}-1)}\sum_{y=0}^{\infty}\frac{B_{y}(\alpha)(e^{t}\theta)^{y}}{y!}\\
    &= e^{-\alpha(e^{\theta}-1)}e^{\alpha(e^{e^{t}\theta}-1)}\\
    &=\exp\{\alpha(e^{e^{t}\theta}-e^{\theta})\}\\
    &=\exp\{\alpha e^{\theta}[e^{\theta(e^{t}-1)}-1]\}.
\end{align*}
\end{proof}
Taking the logarithm of \eqref{eq:moment_generating_function}, one gets the cumulant generating function for the Bell-Touchard distribution. Besides, considering $\mu_{1}^{'} = \mathbb{E}[Y]$, $\mu_{2}^{'} = \mathbb{E}[Y^2]$ and $\mu_{1}^{'} = \kappa_1$, $\mu_{2}^{'} = \kappa_2 + \kappa_{1}^{2}$, where $\kappa_1, \kappa_{2}$ are the first and second cumulants, we have that $\mathcal{K}(t) = \alpha e^{\theta}[e^{\theta(e^{t}-1)}-1]$ from which we get $\kappa_{1} = \alpha\theta e^{\theta}$ and $\kappa_{2} = (\theta + 1)\alpha\theta e^{\theta}$. Writing the moments as a function of the cumulants and remembering that the variance is a function of the first two moments, we get the mean and variance of the Bell-Touchard distribution; namely, $\mathbb{E}(Y) = \alpha \theta e^{\theta}$ and $\mathbb{V}\text{ar}(Y) = \theta(\theta + 1)\alpha e^{\theta}$. One can derive this result using \eqref{eq:MultiplePoisson}.

\section{Bell-Touchard counting process}

A counting process is a stochastic process $\{N(t), \,t\geq 0\}$ where $N(t) \in \mathbb{N}_{0}$ represents the number of events that occur within a time interval of length $t$. The probability of $k$ events occurring in the interval $[0,t)$ is given by ${\Pr[N(t)=k]}$, where ${\Pr[N(0)=0] = 1}$, $\Pr[N(0)=k] = 0$, for $k>0$ and ${\sum_{k \in \mathbb{N}_{0}}\Pr[N(t)=k]=1}$. Moreover, a counting process has stationary increments when the probability of $k$ events taking place in the interval  $(t_1,\,t_2)$ depends only on its length ${t=t_2-t_1}$. The process is said to possess independent increments if the number of events ocurring in disjoint time intervals are independent. Besides, a function ${f:\mathbb{R}\rightarrow\mathbb{R}}$ is said to be $o(s)$ if $\lim_{s\rightarrow 0}{f(s)}/{s}=0$. Now we are in conditions to define the Bell-Touchard process.

\begin{definition}
    A counting process $\{N(t), \,t\geq 0\}$ is said to be a Bell-Touchard process with parameters $(\alpha, \theta)\in \mathbb{R}_{+}^{2}$ if the following assumptions hold:
	\begin{enumerate}
	    \item $N(0) = 0$;
	    \item $\{N(t), \,t\geq 0\}$ has stationary and independent increments;
	    \item $ \Pr[N(t+s)-N(t) = k] = \alpha s\theta^{k}/k! + o(s)$, $k\in\mathbb{N} \text{ and } s,t>0$.
	\end{enumerate}
	\label{def:bell_processes}
    \end{definition}
The assumptions in Definition \ref{def:bell_processes} indicate that the Bell-Touchard process has non-unitary jumps with non-negligible probability. Besides, as a result of our hypotheses, it follows that:

    \begin{theorem}
     If $\{N(t), \,t\geq 0\}$ is a Bell-Touchard process with parameters $(\alpha, \theta)$ then $N(t) \thicksim BT(\alpha t,\theta)$, for all $t\geq0$.
     \label{teo:bell_touchard_theorem}
    \end{theorem}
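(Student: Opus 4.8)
The plan is to follow the classical route used to characterise the Poisson process: convert the infinitesimal description in Definition~\ref{def:bell_processes}(3) into a system of differential--difference equations for $P_k(t):=\Pr[N(t)=k]$, pass to the probability generating function to collapse this system into a single linear ODE in $t$, solve it, and identify the solution via Proposition~\ref{prop:probability_generating_function}.

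First I would recover the missing ``no event'' probability: since the $P_k$ sum to one and, by assumption~(3), $\Pr[N(s)=k]=\alpha s\,\theta^{k}/k!+o(s)$ for every $k\ge 1$, summing over $k\ge 1$ gives $\Pr[N(s)=0]=1-\alpha s(e^{\theta}-1)+o(s)$. Then, decomposing $N(t+h)$ into $N(t)$ plus the increment on $[t,t+h)$ and invoking stationarity and independence of increments,
\begin{equation*}
    P_k(t+h)=P_k(t)\bigl(1-\alpha h(e^{\theta}-1)\bigr)+\alpha h\sum_{j=0}^{k-1}P_j(t)\,\frac{\theta^{\,k-j}}{(k-j)!}+o(h),
\end{equation*}
so that, dividing by $h$ and letting $h\to 0$,
\begin{equation*}
    P_k'(t)=-\alpha(e^{\theta}-1)P_k(t)+\alpha\sum_{j=0}^{k-1}P_j(t)\,\frac{\theta^{\,k-j}}{(k-j)!},\qquad P_0(0)=1,\quad P_k(0)=0\ \ (k\ge 1).
\end{equation*}

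Next I would multiply this system by $s^{k}$ and sum over $k\ge 0$. With $G(s,t):=\sum_{k\ge 0}P_k(t)s^{k}$, the convolution on the right factorises as $G(s,t)(e^{s\theta}-1)$, so the whole system collapses to
\begin{equation*}
    \frac{\partial}{\partial t}G(s,t)=\alpha\bigl(e^{s\theta}-e^{\theta}\bigr)G(s,t),\qquad G(s,0)=1,
\end{equation*}
whose unique solution is $G(s,t)=\exp\{\alpha t(e^{s\theta}-e^{\theta})\}$. By \eqref{eq:bell_probability_generating_function} this is precisely the probability generating function of $BT(\alpha t,\theta)$, and uniqueness of generating functions gives $N(t)\thicksim BT(\alpha t,\theta)$. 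As an alternative to the generating-function step, one can substitute the candidate mass function $e^{-\alpha t(e^{\theta}-1)}\theta^{k}B_k(\alpha t)/k!$ directly into the differential--difference system and verify it using the binomial-type structure \eqref{eq:binomial_identity} of the Bell polynomials (which yields the identity $B_k'(x)=\sum_{j=0}^{k-1}\binom{k}{j}B_j(x)$).

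The crux is not the algebra but the analytic bookkeeping: one must check that the error terms in assumption~(3), once summed over the infinitely many $k\ge 1$, still contribute only $o(s)$; justify exchanging the limit $h\to 0$ with the infinite sum defining $G(s,t)$; and establish that $t\mapsto P_k(t)$ is differentiable (first on the right, then everywhere). These are dispatched by the standard uniform-in-$k$ estimates and dominated convergence, exactly as for the ordinary Poisson process, and I would isolate the needed bound on the summed $o(s)$ terms before passing to any limit.
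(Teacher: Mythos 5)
Your proposal is correct, and it reaches the conclusion by a route that differs in mechanics from the paper's. The paper never writes the per-state Kolmogorov system: it works directly with the Laplace transform $g(t)=\mathbb{E}[e^{-uN(t)}]$, uses stationarity and independence once at the level of expectations to get $g(t+s)=g(t)\,\mathbb{E}[e^{-uN(s)}]$, expands $\mathbb{E}[e^{-uN(s)}]=1+\alpha s[e^{\theta e^{-u}}-e^{\theta}]+o(s)$ from assumption (3), obtains a single ODE for $g$, and identifies the solution with the moment generating function of Proposition~\ref{prop:moment_generating_function} evaluated at $-u$. You instead derive the forward differential--difference equations for $P_k(t)$ from the increment convolution, collapse them with the probability generating function, and identify the solution via Proposition~\ref{prop:probability_generating_function}; your alternative of verifying the candidate mass function directly through $B_k'(x)=\sum_{j=0}^{k-1}\binom{k}{j}B_j(x)$ is also sound. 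What the paper's route buys is economy: since independence is applied to the expectation itself, there is no termwise differentiation of an infinite series and no interchange of limit and sum to justify, whereas your route needs exactly the bookkeeping you flag (uniform bounds on $P_k'$, dominated convergence), but in exchange it delivers the Kolmogorov equations themselves, which is more structural information. Note also that both arguments share the same delicate point, namely that $\Pr[N(s)=0]=1-\alpha s(e^{\theta}-1)+o(s)$ requires the $o(s)$ errors of assumption (3) to be summable to $o(s)$ over $k\geq 1$; the paper performs this step without comment, so your explicit acknowledgment of it is, if anything, more careful than the original.
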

    \begin{proof} In order to prove it we use the fact that a non-negative random variable $X$, with moment generating function $M_{X}(t)$, has Laplace transform $g(t) = M_{X}(-t) = \mathbb{E}[e^{-tX}]$, for $t\geq 0$. As a consequence of that, the Laplace transform uniquely determines the probability distribution, see \cite[Chapter~2]{ross2010introduction}.
        Let $g(t) = \mathbb{E}[\exp\{-uN(t)\}]$, for all $t$, we are going to write a differential equation for $g(t)$, noting that $g(0)=1$. Indeed, since
       \begin{equation}
           \begin{aligned}[b]
              g(t + s) &= \mathbb{E}[\exp\{-uN(t+s)\}] \\[.2cm]
                &= \mathbb{E}[\exp\{-u(N(t)+N(t+s)-N(t))\}]\\[.2cm]
                &= \mathbb{E}[\exp\{-uN(t)\}\exp\{-u(N(t+s)-N(t))\}]\\[.2cm]
                &=\mathbb{E}[\exp\{-uN(t)\}]\mathbb{E}[\exp\{-u(N(t+s)-N(t))\}]\\[.2cm]
                &=g(t)\mathbb{E}[\exp\{-uN(s)\}],
           \end{aligned}
           \label{eq:dif_laplace_1}
       \end{equation}
       where the last two steps follow from the independence and stationarity assumptions. As a result from Definition \ref{def:bell_processes}(3), one has 
       \begin{equation*}
       \begin{split}
        \Pr[N(s)=0] &=1 - \sum_{k=1}^{\infty}\Pr[N(s)=k]\\ 
                    &=1 - \sum_{k=1}^{\infty} \alpha s\frac{\theta^{k}}{k!} + o(s)\\
                    &=1 - \alpha s(e^{\theta}-1)+o(s).
        \end{split}
       \end{equation*}
       Therefore, conditioning in $N(s) = k,\text{ for all } k \in \mathbb{N}_{0} $, one gets
       \begin{equation}
           \begin{aligned}[b]
              \mathbb{E}[\exp\{-u(N(s)\}] &= \sum_{k=0}^{\infty}e^{-uk}\Pr[N(s) = k]\\[.2cm]
              &= 1 - \alpha s(e^{\theta}-1)+o(s) + \sum_{k=1}^{\infty}e^{-uk}\alpha s\frac{\theta^{k}}{k!}\\[.2cm]
              &= 1 - \alpha s(e^{\theta}-1)+o(s) + \alpha s[e^{\theta e^{-u}}-1]\\[.2cm]
              &= 1 + \alpha s[e^{\theta e^{-u}}-e^{\theta}] + o(s).
           \end{aligned}
           \label{eq:dif_laplace_2}
       \end{equation}
       And from $\eqref{eq:dif_laplace_1}$ and $\eqref{eq:dif_laplace_2}$, it follows that
       \begin{equation*}
           g(t+s) = g(t)(1 + \alpha s[e^{\theta e^{-u}}-e^{\theta}]) + o(s), 
       \end{equation*}
       thus
       \begin{equation*}
           \frac{g(t+s)-g(t)}{s} = g(t) \alpha [e^{\theta e^{-u}}-e^{\theta}] + \frac{o(s)}{s}
       \end{equation*}
       and taking $s\rightarrow 0$,
       \begin{equation*}
           \frac{dg(t)}{dt} = g(t)\alpha [e^{\theta e^{-u}}-e^{\theta}].
       \end{equation*}
    Solving the differential equation, one gets
       \begin{equation*}
           g(t) = \exp\{\alpha te^{\theta}[e^{\theta (e^{-u}-1)}-1]\},
       \end{equation*}
       which is the Laplace transform of a random variable $X \thicksim BT(\alpha t, \theta)$, or the moment generating function $\eqref{eq:moment_generating_function}$ evaluated in $-t$.
       
    \end{proof}
    As a result of the Theorem \ref{teo:bell_touchard_theorem}, we have another definition for the Bell-Touchard process.
  \begin{definition}
    A counting process $\{N(t), \,t\geq 0\}$ is called a Bell-Touchard process with parameters $(\alpha, \theta)$ if the following assumptions hold:
	\begin{enumerate}
	    \item $N(0) = 0$;
	    \item $\{N(t), \,t\geq 0\}$ has stationary and independent increments;
	    \item For all $k \in \mathbb{N}_{0}$, and $t>0$:
	    $$ \Pr[N(t)=k] = e^{-\alpha t(e^{\theta}-1)}\frac{\theta^{k}}{k!}B_{k}(\alpha t),$$ 	where $B_k(\alpha)$ is the $k^{\text{th}}$ Bell polynomial.

	\end{enumerate}
	\label{def:bell_processes2}
    \end{definition}
    
\subsection{Properties}

Consider a multiple Poisson process $\{\xi(t), \,t\geq 0\}$ depending on the non-negative sequence ${\{c_{n}\}_{n\geq1}}$ with $\sum c_n<\infty$. That is, $\xi(0)=0$, the process has stationary and independent increments, and for any $k\in \mathbb{N}$ and $t>0$:

  \begin{equation}
	        \Pr[\xi(t)=k] =\exp{\left \{-\sum_{n=1}^{\infty}tc_n \right \}} \sum_{r_1+2r_2+\dots+kr_k=k}\frac{(tc_1)^{r_1}(tc_2)^{r_2} \dots (tc_k)^{r_k}}{r_1!r_2!\dots r_k!}.
	        \label{eq:poisson_composed_family}
	\end{equation}

\bigskip
The following result states that a Bell-Touchard process is a multiple Poisson process too.

	\begin{proposition}
	    If $\{N(t), t\geq0\}$ is a Bell-Touchard process with parameters  $(\alpha, \theta)$, then $N(t)$ has mass probability function given by \eqref{eq:poisson_composed_family}, where $c_n = \alpha\theta^n/n!$.
	    \label{prop:poisson_composed_family_member}
	\end{proposition}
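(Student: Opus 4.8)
The plan is to start from Theorem \ref{teo:bell_touchard_theorem}, which already tells us that $N(t) \thicksim BT(\alpha t,\theta)$, so that $\Pr[N(t)=k] = e^{-\alpha t(e^{\theta}-1)}\theta^{k}B_{k}(\alpha t)/k!$ by \eqref{eq:bell_touchard_distribution}. It then suffices to check that the right-hand side of \eqref{eq:poisson_composed_family}, specialised to $c_{n}=\alpha\theta^{n}/n!$, is termwise equal to this quantity for every $k\in\mathbb{N}_{0}$ and $t>0$.

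First I would dispose of the normalising constant. With $c_{n}=\alpha\theta^{n}/n!$ one has $\sum_{n\geq 1}tc_{n} = t\alpha\sum_{n\geq 1}\theta^{n}/n! = t\alpha(e^{\theta}-1)$, so the exponential prefactor in \eqref{eq:poisson_composed_family} is exactly $e^{-\alpha t(e^{\theta}-1)}$, agreeing with the prefactor in \eqref{eq:bell_touchard_distribution}. The case $k=0$ is then immediate, since the empty sum contributes $1$ and both sides collapse to $e^{-\alpha t(e^{\theta}-1)}$.

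Next I would rewrite the finite sum for $k\geq 1$. Substituting $tc_{j}=t\alpha\theta^{j}/j!$ into the generic summand, it becomes $\prod_{j=1}^{k}\frac{(\alpha t)^{r_{j}}\theta^{jr_{j}}}{(j!)^{r_{j}}r_{j}!}$. Because the index set is constrained by $r_{1}+2r_{2}+\dots+kr_{k}=k$, the powers of $\theta$ combine to $\theta^{\sum_{j}jr_{j}}=\theta^{k}$, which is constant over the sum and factors out; what remains is $\theta^{k}\sum_{r_{1}+2r_{2}+\dots+kr_{k}=k}\frac{(\alpha t)^{r_{1}+\dots+r_{k}}}{r_{1}!\cdots r_{k}!\,(1!)^{r_{1}}\cdots(k!)^{r_{k}}}$. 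Comparing the leftover sum with the Fa\`a di Bruno expression \eqref{eq:exp_bell_polynomial2} for $B_{k}(\alpha t)$, it is precisely $B_{k}(\alpha t)/k!$. Reassembling the pieces gives $\Pr[\xi(t)=k] = e^{-\alpha t(e^{\theta}-1)}\theta^{k}B_{k}(\alpha t)/k! = \Pr[N(t)=k]$, as required.

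There is no genuine obstacle in this argument; the only point requiring care is the exponent bookkeeping in the previous step, namely noticing that the partition constraint forces $\sum_{j}jr_{j}=k$ so that $\theta^{k}$ pulls out of the sum, and then matching the residual multinomial-type sum against \eqref{eq:exp_bell_polynomial2} rather than against the Dobi\'nski form \eqref{eq:dobinski_formula}. As an alternative one could bypass the combinatorics altogether by comparing probability generating functions: the multiple Poisson process attached to $\{c_{n}\}$ has pgf $\exp\{t\sum_{n}c_{n}(s^{n}-1)\}$, which under $c_{n}=\alpha\theta^{n}/n!$ equals $\exp\{\alpha t(e^{s\theta}-e^{\theta})\}$, the pgf of $BT(\alpha t,\theta)$ from \eqref{eq:bell_probability_generating_function}, and uniqueness of the pgf concludes. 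I would present the direct termwise computation as the main proof, since the statement is phrased in terms of the explicit mass function \eqref{eq:poisson_composed_family}.
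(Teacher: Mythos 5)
Your proposal is correct and takes essentially the same approach as the paper: both rest on Theorem \ref{teo:bell_touchard_theorem}, the Fa\`a di Bruno form \eqref{eq:exp_bell_polynomial2} of the Bell polynomial, the constraint $r_1+2r_2+\dots+kr_k=k$ to manage the powers of $\theta$, and the identification $\sum_{n\geq1}tc_n=\alpha t(e^{\theta}-1)$ for the exponential prefactor. The only difference is the direction of the algebra (the paper rewrites the Bell-Touchard mass function into the composed-Poisson form and reads off $c_n$, whereas you specialise \eqref{eq:poisson_composed_family} at $c_n=\alpha\theta^{n}/n!$ and reduce it back to \eqref{eq:bell_touchard_distribution}), which is purely cosmetic.
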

	\begin{proof}
	    From Theorem \ref{teo:bell_touchard_theorem}, taking \eqref{eq:bell_touchard_distribution}, replacing $B_{x}(\alpha t)$ by the right hand side of \eqref{eq:exp_bell_polynomial2} and making $x = k$ we have
        \begin{equation*}
             \Pr[N(t)=k] =\theta^{k}e^{-\alpha t(e^\theta-1)}\sum_{r_1+2r_2+\dots+kr_k=k}\frac{(\alpha t)^{r_1+r_2+\dots r_k}}{(1!)^{r_1}(2!)^{r_2}\dots (k!)^{r_k}r_1!r_2!\dots r_k!},
        \end{equation*}
        however $k=r_1+2r_2+\dots+kr_k$ such that $\theta^k = \theta^{r_1+2r_2+\dots+kr_k}$, therefore
        \begin{equation*}
            \Pr[N(t)=k] = e^{-\alpha t(e^\theta-1)} \sum_{r_1+2r_2+\dots+kr_k=k}\frac{\theta^{r_1+2r_2+\dots +kr_k}(\alpha t)^{r_1+r_2+\dots r_k}}{(1!)^{r_1}(2!)^{r_2}\dots (k!)^{r_k}r_1!r_2!\dots r_k!}.
        \end{equation*}
        Expanding the exponential function in the last expression using Taylor Series centered at zero and combining $\theta 's \textrm{ and } \alpha t's$ under the powers $r's$ we get
        \begin{align*}
		  \Pr[N(t)=k] &=\exp{\left \{-\sum_{n=1}^{\infty}\alpha t\frac{\theta^n}{n!} \right \}} \sum_{r_1+2r_2+\dots+kr_k=k}\frac{(t\alpha\theta)^{r_1}(t\alpha \theta^2)^{r_2} \dots (t\alpha\theta^k)^{r_k}}{(1!)^{r_1}(2!)^{r_2}\dots (k!)^{r_k}r_1!r_2!\dots r_k!}.
	    \end{align*}
	    Comparing the last expression with \eqref{eq:poisson_composed_family}, we observe that $c_n = \alpha\theta^n/n!$, such that the convergent sequence is given by $\{c_n = \alpha\theta^n/n! \}_{n \geq 1}$, closing the demonstration.
	
	\end{proof}
	
	Let $\tau_{0} = 0$ and define a random variable $\tau_i := \inf\{t>0 : N(t)-N(\tau_{i-1})\geq1\}$ for all $i\in \mathbb{N}$.
	
    \begin{proposition}
        $\tau_1, \tau_2, \dots$ are independent and identically distributed random variables, following an exponential distribution with $\lambda = \alpha(e^{\theta}-1)$.
        \label{prop:waiting_times_distribution}
    \end{proposition}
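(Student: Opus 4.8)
The plan is to identify the distribution of the first jump time $\tau_1$ and then transfer it to the successive jump times via the regenerative (strong Markov) structure of the process. For $\tau_1$: since the paths of $N$ are non-decreasing, integer-valued and right-continuous with $N(0)=0$, the event $\{\tau_1>t\}$ coincides with $\{N(t)=0\}$, and by Theorem~\ref{teo:bell_touchard_theorem} (equivalently, Definition~\ref{def:bell_processes2}(3) with $B_0(\alpha t)=1$) this event has probability $e^{-\alpha t(e^{\theta}-1)}$. That is precisely the survival function of an exponential random variable with parameter $\lambda=\alpha(e^{\theta}-1)$, so $\tau_1\sim\mathrm{Exp}(\alpha(e^{\theta}-1))$. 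If one prefers not to quote Theorem~\ref{teo:bell_touchard_theorem}, the same conclusion follows by noting that $h(t):=\Pr[N(t)=0]$ satisfies the multiplicative Cauchy equation $h(t+s)=h(t)h(s)$ (by the stationarity and independence of increments, since $\{N(t+s)=0\}=\{N(t)=0\}\cap\{N(t+s)-N(t)=0\}$) together with $h(s)=1-\alpha s(e^{\theta}-1)+o(s)$ coming from Definition~\ref{def:bell_processes}(3), which forces $h(t)=e^{-\alpha t(e^{\theta}-1)}$.

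For the successive times I would argue by induction, proving that the interarrival times $\tau_1-\tau_0,\ \tau_2-\tau_1,\ \dots,\ \tau_n-\tau_{n-1}$ are independent and each distributed as $\mathrm{Exp}(\alpha(e^{\theta}-1))$. The key ingredient is that, because $\{N(t)\}$ has stationary and independent increments, the shifted process $\{N(\tau_{i-1}+s)-N(\tau_{i-1}):s\geq0\}$ is again a Bell-Touchard process with the same parameters $(\alpha,\theta)$ and is independent of the information generated by the process up to time $\tau_{i-1}$, in particular of $\tau_1,\dots,\tau_{i-1}$. Since $\tau_i-\tau_{i-1}$ is exactly the first-arrival time of that shifted process, the first paragraph yields $\tau_i-\tau_{i-1}\sim\mathrm{Exp}(\alpha(e^{\theta}-1))$, independent of $(\tau_1,\dots,\tau_{i-1})$; iterating over $i$ gives the full statement.

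I expect the main obstacle to be making the ``restart at $\tau_{i-1}$'' rigorous, since $\tau_{i-1}$ is a continuously distributed random time and a naive conditioning on $\{\tau_{i-1}=s\}$ is not legitimate. I would handle this either by invoking the standard strong Markov property for processes with stationary and independent increments, or, in a self-contained way, by approximating $\tau_{i-1}$ from above by the dyadic stopping times $\tau_{i-1}^{(m)}:=2^{-m}\lceil 2^{m}\tau_{i-1}\rceil$: on each deterministic grid $\{k2^{-m}:k\in\mathbb{N}_0\}$ the increments of $N$ are genuinely independent, so one can factor $\Pr[\tau_i-\tau_{i-1}^{(m)}>t,\ \tau_1\in A_1,\dots,\tau_{i-1}\in A_{i-1}]$ exactly, and then let $m\to\infty$ using countable additivity and the right-continuity of the sample paths. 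One caveat on the phrasing: the $\tau_i$ are strictly increasing and hence cannot literally be i.i.d.; what the argument establishes --- and what is surely intended --- is that the interarrival times $\tau_i-\tau_{i-1}$, $i\geq1$, are i.i.d.\ $\mathrm{Exp}(\alpha(e^{\theta}-1))$, and I would state the conclusion in those terms.
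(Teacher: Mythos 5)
Your proposal is correct and follows essentially the same route as the paper: identify $\Pr[\tau_1>t]=\Pr[N(t)=0]=e^{-\alpha t(e^{\theta}-1)}$ via Theorem \ref{teo:bell_touchard_theorem}, then restart the process at the preceding arrival using the stationary and independent increments and conclude by induction. Where the paper simply conditions on $\{\tau_1=t\}$ informally, you invoke the strong Markov property (or a dyadic approximation) to justify the restart, so yours is a more rigorous rendering of the same argument, and your reading of the conclusion as concerning i.i.d.\ interarrival times matches the paper's intent.
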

    \begin{proof}
        It follows from \ref{teo:bell_touchard_theorem} that $\tau_{1} \thicksim \text{Exp}(\alpha (e^{\theta}-1))$. Indeed,
        \begin{equation*}
            \Pr[\tau_{1}>t] = \Pr[N(t)=0] = e^{-\alpha t(e^{\theta}-1)}.
        \end{equation*}
        For $\tau_{2}$, we have that
        \begin{align*}
            {\Pr[\tau_2>s\bigm\lvert\tau_1=t]} &= \Pr[\textrm{0 events in } (t,t+s]\bigm\lvert\tau_1=t],\\
            &= \Pr[\textrm{0 events in } (t,t+s]], \textrm{ (by independent increments)}\\
            &=e^{-\alpha s(e^{\theta}-1)} \textrm{ (by stationary increments)},
        \end{align*}
        and by applying induction for $i \geq 2$, we complete the proof.
        
    \end{proof}
    
    Define $\delta_n := \sum_{i=0}^{n}\tau_i$. Therefore, $\delta_n$ is the length of the time interval within events of the process take place.
    Note that, according to the Definition  \ref{def:bell_processes}, $N(\delta_n)\geq n$. In other words, the number of events within a specific time interval will be equal or greater than the number of jumps of the process within the same time interval, where $N(\delta_n)=n$ when all the jumps heights are unitary. Combining it with the previous result, we conclude that the Bell-Touchard process is also a compound Poisson process.
    \begin{proposition}
         Let $\{M(t), t\geq 0\}$ be a Poisson process with rate  $\alpha  (e^{\theta}-1)$ and $\{X_i\}_{i\geq1}$ a sequence of independent and identically distributed random variables  which also is independent of $\{M(t), t\geq 0\}$ with $X_i$ having probability mass function given by $$\Pr[X_{i}=x] = \frac{\theta^x/x!}{e^{\theta}-1}, \text{ for all } x \in \mathbb{N}.$$ Then, the following equality in distribution holds.
	    \begin{equation}
	        N(t) \stackrel{d}{=} \sum_{i=1}^{M(t)}X_i, \text{ for } t\geq 0.
	        \label{eq:bell_compound_poisson_family_member}
	    \end{equation}
	    \label{prop: bell_compound_poisson_family_member}
	    Where $N(t)$ is the number of events of the Bell-Touchard process.
    \end{proposition}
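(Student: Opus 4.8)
The plan is to identify the distribution on the right-hand side of \eqref{eq:bell_compound_poisson_family_member} by computing its probability generating function and comparing it with the one already obtained for the Bell-Touchard law. First I would recall the classical fact that if $S=\sum_{i=1}^{M}X_i$, where $M\thicksim\mathrm{Poisson}(\lambda)$ is independent of the i.i.d. sequence $\{X_i\}$ with common probability generating function $G_{X_1}$, then $G_S(s)=\exp\{\lambda(G_{X_1}(s)-1)\}$; this follows by conditioning on the value of $M$ and using independence, the interchange of sum and expectation being justified since all terms are nonnegative for $s\in(0,1)$. In our setting the relevant parameter is $\lambda=\alpha(e^{\theta}-1)t$, because $M(t)\thicksim\mathrm{Poisson}(\alpha(e^{\theta}-1)t)$, and the empty sum (when $M(t)=0$) correctly produces the atom at $0$ since each $X_i$ is supported on $\mathbb{N}$.

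Next I would compute the probability generating function of $X_1$ directly from its mass function:
\begin{equation*}
G_{X_1}(s) = \sum_{x=1}^{\infty}s^{x}\frac{\theta^{x}/x!}{e^{\theta}-1} = \frac{1}{e^{\theta}-1}\sum_{x=1}^{\infty}\frac{(s\theta)^{x}}{x!} = \frac{e^{s\theta}-1}{e^{\theta}-1}, \quad |s|<1.
\end{equation*}
Substituting this into the compound Poisson identity gives
\begin{equation*}
G_{\sum_{i=1}^{M(t)}X_i}(s) = \exp\left\{\alpha(e^{\theta}-1)t\left(\frac{e^{s\theta}-1}{e^{\theta}-1}-1\right)\right\} = \exp\{\alpha t(e^{s\theta}-e^{\theta})\}.
\end{equation*}

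Finally I would observe that this is precisely the probability generating function of a $BT(\alpha t,\theta)$ random variable given in Proposition \ref{prop:probability_generating_function}, and that by Theorem \ref{teo:bell_touchard_theorem} we have $N(t)\thicksim BT(\alpha t,\theta)$. Since the probability generating function determines the distribution of an $\mathbb{N}_{0}$-valued random variable uniquely, the equality in distribution \eqref{eq:bell_compound_poisson_family_member} follows for every $t\geq 0$. I do not expect any serious obstacle here; the only points requiring a little care are the bookkeeping of the time parameter $t$ inside the Poisson rate and the support of $X_1$ being $\mathbb{N}$ rather than $\mathbb{N}_{0}$. (Alternatively, one could run the identical argument with Laplace transforms, mirroring the proof of Theorem \ref{teo:bell_touchard_theorem}, and arrive at the moment generating function \eqref{eq:moment_generating_function} evaluated appropriately.)
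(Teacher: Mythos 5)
Your proposal is correct and follows essentially the same route as the paper: both compute the probability generating function of the compound sum by conditioning on $M(t)$, identify $\gamma(s)=(e^{s\theta}-1)/(e^{\theta}-1)$ as the pgf of $X_1$, and conclude by matching the result $\exp\{\alpha t(e^{s\theta}-e^{\theta})\}$ with the $BT(\alpha t,\theta)$ pgf of Proposition \ref{prop:probability_generating_function}. Your explicit appeal to Theorem \ref{teo:bell_touchard_theorem} and to pgf uniqueness merely makes precise what the paper leaves implicit.
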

    \begin{proof}
        Let $\gamma(s)=\mathbb{E}[s^{X_{i}}]$ be a probability generating function of $X_i$, thus
        \begin{equation}
            \gamma(s) = \frac{e^{\theta s}-1}{e^{\theta}-1}.
            \label{eq: pgf_zero_truncated_poisson}
        \end{equation}
        Considering \eqref{eq:bell_compound_poisson_family_member},
        \begin{equation}
            \begin{aligned}[b]
                \mathbb{E}[s^{N(t)}] &= \mathbb{E}\{\mathbb{E}[s^{\sum_{i=1}^{M(t)}X_i}|M(t)]\}\\
                &=\sum_{i=0}^{\infty}\frac{e^{-\alpha t(e^{\theta}-1)}[\alpha t(e^{\theta}-1)]^{i}}{i!}\gamma(s)^{i}\\
                &=\exp\{-\alpha t(e^{\theta}-1)(1-\gamma(s))\}
            \end{aligned}
            \label{eq:deduction_pgf_compound}
        \end{equation}
        and replacing \eqref{eq: pgf_zero_truncated_poisson} on \eqref{eq:deduction_pgf_compound} we have
        \begin{equation}
            G_{N(t)}(s):= \mathbb{E}[s^{N(t)}] = \exp\{\alpha t[e^{\theta s}-e^{\theta}]\} \text{ , }|s|<1,
            \label{eq:compound_process_pgf}
        \end{equation}
        which is the probability generating function of a Bell-Touchard random variable with parameters  $\alpha t \text{ e } \theta$, according to the Proposition \ref{prop:probability_generating_function}.
        
    \end{proof}
    The following result is about the superposition of Bell-Touchard processes, which result in a Bell-Touchard process since all random variables involved have the same parameter $\theta$.

    \begin{proposition}
        Let $\{N_{i}(t), t\geq 0\}_{i \in \mathbb{N}}$ be a family of Bell-Touchard process with parameters $\{\alpha_i,\theta\}_{i \in \mathbb{N}}$ and $\tilde{N}_{n}(t) = \sum_{i=1}^{n}N_{i}(t)$, for all $t\geq0$. Thus $\{\tilde{N}_{n}(t), t\geq 0\}$ is a Bell-Touchard process with parameters $(\tilde{\alpha}_{n}, \theta)$, where $\tilde{\alpha}_{n}:=\sum_{i=1}^{n}\alpha_{i}$.
        \label{prop:superposition_same_theta}
    \end{proposition}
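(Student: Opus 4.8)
The plan is to verify that $\{\tilde{N}_{n}(t),\,t\ge 0\}$ satisfies the three conditions of Definition \ref{def:bell_processes2}, under the hypothesis (implicit in the statement and genuinely needed) that the processes $\{N_{i}(t),\,t\ge 0\}$, $i=1,\dots,n$, are mutually independent. Condition (1) is immediate, since $\tilde{N}_{n}(0)=\sum_{i=1}^{n}N_{i}(0)=0$.

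For condition (2) I would argue that both stationarity and independence of the increments of $\tilde{N}_{n}$ are inherited from the corresponding properties of each $N_{i}$ together with the mutual independence. Concretely, for disjoint intervals $(a_{1},b_{1}],\dots,(a_{m},b_{m}]$ the array $\{N_{i}(b_{j})-N_{i}(a_{j})\}_{1\le i\le n,\,1\le j\le m}$ consists of independent random variables --- independence across $j$ for each fixed $i$ by the independent-increments property of $N_{i}$, and independence across $i$ by hypothesis --- so the row sums $\tilde{N}_{n}(b_{j})-\tilde{N}_{n}(a_{j})=\sum_{i=1}^{n}\bigl(N_{i}(b_{j})-N_{i}(a_{j})\bigr)$, $j=1,\dots,m$, are independent. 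Likewise $\tilde{N}_{n}(t+s)-\tilde{N}_{n}(t)=\sum_{i=1}^{n}\bigl(N_{i}(t+s)-N_{i}(t)\bigr)\thicksim\sum_{i=1}^{n}N_{i}(s)=\tilde{N}_{n}(s)$ in distribution, because each summand is stationary and the summands are independent; hence $\tilde{N}_{n}$ has stationary increments.

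It then remains to identify the one-dimensional distributions, i.e.\ to check condition (3). By Theorem \ref{teo:bell_touchard_theorem}, $N_{i}(t)\thicksim BT(\alpha_{i}t,\theta)$ for every $i$, and these random variables are independent; since they share the same second parameter $\theta$, Proposition \ref{prop: sum_of_bt_random_variables} gives $\tilde{N}_{n}(t)=\sum_{i=1}^{n}N_{i}(t)\thicksim BT\bigl(\textstyle\sum_{i=1}^{n}\alpha_{i}t,\theta\bigr)=BT(\tilde{\alpha}_{n}t,\theta)$. Equivalently, one may multiply the probability generating functions of Proposition \ref{prop:probability_generating_function} to get $G_{\tilde{N}_{n}(t)}(s)=\prod_{i=1}^{n}\exp\{\alpha_{i}t(e^{s\theta}-e^{\theta})\}=\exp\{\tilde{\alpha}_{n}t(e^{s\theta}-e^{\theta})\}$ and invoke uniqueness. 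Either way this is exactly condition (3) of Definition \ref{def:bell_processes2} with parameters $(\tilde{\alpha}_{n},\theta)$, so $\{\tilde{N}_{n}(t),\,t\ge 0\}$ is a Bell-Touchard process with those parameters.

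There is no serious obstacle; the argument is essentially bookkeeping. The only points needing care are that the hypotheses must include mutual independence of $N_{1},\dots,N_{n}$ (without it the conclusion fails), and that one should route condition (3) through the distributional characterization of Definition \ref{def:bell_processes2} and Proposition \ref{prop: sum_of_bt_random_variables} rather than trying to add up the $o(s)$ estimates of Definition \ref{def:bell_processes}(3) for the individual processes --- the latter route does work but forces one to control the cross terms $\Pr[N_{1}(s)=k_{1},\dots,N_{n}(s)=k_{n}]$ and is noticeably messier.
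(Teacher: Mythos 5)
Your proof is correct, but it takes a different route from the paper. The paper proves the proposition by induction on $n$: for $n=2$ it computes the convolution of the two probability mass functions directly and collapses the sum $\sum_{i=0}^{k}\binom{k}{i}B_{i}(\alpha_1 t)B_{k-i}(\alpha_2 t)$ using the binomial-type identity \eqref{eq:binomial_identity} for Bell polynomials, obtaining $B_{k}((\alpha_1+\alpha_2)t)$, and then repeats the same computation for the inductive step. You instead identify the one-dimensional distributions in a single stroke via Theorem \ref{teo:bell_touchard_theorem} together with Proposition \ref{prop: sum_of_bt_random_variables} (equivalently, by multiplying the probability generating functions of Proposition \ref{prop:probability_generating_function} and invoking uniqueness), which avoids the induction and the Bell-polynomial manipulation entirely; the paper's computation, on the other hand, makes explicit how the binomial identity for Bell polynomials is what drives the closure, without appealing to generating-function uniqueness. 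Two further points in your favor: you explicitly verify conditions (1) and (2) of Definition \ref{def:bell_processes2} (the increment structure of $\tilde{N}_{n}$), which the paper's proof omits, as it only establishes the marginal probability mass function; and you correctly flag that mutual independence of the processes $N_{1},\dots,N_{n}$ is needed and is only implicit in the statement --- the paper's convolution step $\Pr[N_1(t)+N_2(t)=k]=\sum_{i}\Pr[N_1(t)=i]\Pr[N_2(t)=k-i]$ silently uses it as well.
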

    \begin{proof}
        For $n$, we assume that the probability mass function of $\tilde{N}_{n}(t)$ is given by:
        \begin{equation*}
            \Pr[\tilde{N}_{n}(t)=k]=e^{-\tilde{\alpha}_{n}t(e^{\theta}-1)}\frac{\theta^{k}}{k!}B_{k}(\tilde{\alpha}_{n}t)
        \end{equation*}
        For $n=2$, we have
        \begin{align*}
            \Pr[N_1(t)+N_2(t)=k] &= \sum_{i=0}^{k}\Pr[N_1(t)=i]\Pr[N_2(t)=k-i]\\
            &=\sum_{i=0}^{k}e^{-\alpha_1 t(e^{\theta}-1)}\frac{\theta^{i}}{i!}B_{i}(\alpha_1 t)e^{-\alpha_2 t(e^{\theta}-1)}\frac{\theta^{k-i}}{(k-i)!}B_{k-i}(\alpha_2 t)\\
            &=e^{-(\alpha_1 t+\alpha_2 t)(e^{\theta}-1)}\frac{\theta^{k}}{k!}\sum_{i=0}^{k}\binom{k}{i}B_{i}(\alpha_1 t)B_{k-i}(\alpha_2 t)\\
            &=e^{-(\alpha_1+\alpha_2)t(e^{\theta}-1)}\frac{\theta^{k}}{k!}B_{k}((\alpha_1+\alpha_2)t)\\
            \Pr[\tilde{N}_{2}(t)=k]&=e^{-\tilde{\alpha}_{2}t(e^{\theta}-1)}\frac{\theta^{k}}{k!}B_{k}(\tilde{\alpha}_{2}t)
        \end{align*}
        We assume the induction hypothesis for $n$. Therefore, considering $n+1$, we have
        \begin{align*}
            \Pr[\tilde{N}_{n}(t)+N_{n+1}(t)=k] &= \sum_{i=0}^{k}\Pr[\tilde{N}_{n}(t)=i]\Pr[N_{n+1}(t)=k-i]\\
            &=\sum_{i=0}^{k}e^{-\tilde{\alpha}_{n}t(e^{\theta}-1)}\frac{\theta^{i}}{i!}B_{i}(\tilde{\alpha}_{n}t)e^{-\alpha_{n+1} t(e^{\theta}-1)}\frac{\theta^{k-i}}{(k-i)!}B_{k-i}(\alpha_{n+1} t)\\
            &=e^{-(\tilde{\alpha}_n t+\alpha_{n+1} t)(e^{\theta}-1)}\frac{\theta^{k}}{k!}\sum_{i=0}^{k}\binom{k}{i}B_{i}(\tilde{\alpha}_n t)B_{k-i}(\alpha_{n+1} t)\\
            &=e^{-(\tilde{\alpha}_n+\alpha_{n+1})t(e^{\theta}-1)}\frac{\theta^{k}}{k!}B_{k}((\tilde{\alpha}_n+\alpha_{n+1})t)\\
            \Pr[\tilde{N}_{n+1}(t)=k]&=e^{-\tilde{\alpha}_{n+1}t(e^{\theta}-1)}\frac{\theta^{k}}{k!}B_{k}(\tilde{\alpha}_{n+1}t),
        \end{align*}
        concluding the demonstration by induction.
        
    \end{proof}
    Suppose we have two Bell-Touchard process $\{N_{1}(t), t\geq 0\}$ and $\{N_{2}(t), t\geq 0\}$, with respective parameters $(\alpha_{1}, \theta_{1}) \text{ and } (\alpha_{2}, \theta_{2})$ and we would like to generate a process $\{V(t), t\geq 0\}$ where $V(t) = N_{1}(t)+N_{2}(t)$. In this situation, the resulting process is not a Bell-Touchard process, however it can be easily deduced from the properties of the compound Poisson family, in this case, the fact that this family is closed for convolutions (see \cite[chapter~5]{ross2010introduction}).
    \begin{proposition}
        Let $\{N_{1}(t), t\geq 0\}$ and $\{N_{2}(t), t\geq 0\}$ be Bell-Touchard processes with parameters $(\alpha_{1}, \theta_{1}) \text{ and } (\alpha_{2}, \theta_{2})$ and $\nu(\theta_{i}):=e^{\theta_{i}}-1$. Thus $\{V(t), t\geq 0\}$ where $V(t) = N_{1}(t)+N_{2}(t)$ is a compound Poisson process.
        \begin{equation*}
            V(t) = \sum_{i=1}^{J(t)}X_{i} \text{ for } t\geq 0,
        \end{equation*}
        where $\{J(t),t\geq0\}$ is a Poisson process with rate $\lambda = \alpha_{1}\nu(\theta_{1})+\alpha_{2}\nu(\theta_{2})$ and the probability mass function of $X_i$ is given by:
        \begin{equation*}
            \Pr[X_{i}=x] = \frac{\theta_{1}^{x}+\theta_{2}^{x}}{[\nu(\theta_{1})+\nu(\theta_{2})]x!}, \text{ } x \in \mathbb{N}_{0}.
        \end{equation*}
        \label{prop: sum_different_theta}
    \end{proposition}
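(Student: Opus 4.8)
The plan is to argue via probability generating functions, using independence of the two processes together with the explicit Bell--Touchard PGF from Proposition~\ref{prop:probability_generating_function} and Theorem~\ref{teo:bell_touchard_theorem}. As in Proposition~\ref{prop:superposition_same_theta} I take $\{N_1(t)\}$ and $\{N_2(t)\}$ to be independent, so that $G_{V(t)}(s)=G_{N_1(t)}(s)\,G_{N_2(t)}(s)$. By Theorem~\ref{teo:bell_touchard_theorem}, $N_i(t)\sim BT(\alpha_i t,\theta_i)$, whence
$$G_{V(t)}(s)=\exp\{\alpha_1 t(e^{\theta_1 s}-e^{\theta_1})\}\,\exp\{\alpha_2 t(e^{\theta_2 s}-e^{\theta_2})\}=\exp\bigl\{t\bigl[\alpha_1(e^{\theta_1 s}-e^{\theta_1})+\alpha_2(e^{\theta_2 s}-e^{\theta_2})\bigr]\bigr\}.$$

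Next I would massage the exponent into the compound-Poisson shape $-\lambda t\bigl(1-\gamma(s)\bigr)$. Adding and subtracting $\alpha_i$ inside each bracket rewrites the exponent as $\bigl[\alpha_1(e^{\theta_1 s}-1)+\alpha_2(e^{\theta_2 s}-1)\bigr]-\bigl[\alpha_1\nu(\theta_1)+\alpha_2\nu(\theta_2)\bigr]$, so setting $\lambda:=\alpha_1\nu(\theta_1)+\alpha_2\nu(\theta_2)$ and $\gamma(s):=\lambda^{-1}\bigl[\alpha_1(e^{\theta_1 s}-1)+\alpha_2(e^{\theta_2 s}-1)\bigr]$ one gets $G_{V(t)}(s)=\exp\{-\lambda t(1-\gamma(s))\}$. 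Expanding $e^{\theta_i s}=\sum_{x\ge 0}(\theta_i s)^x/x!$ identifies $\gamma$ as the PGF of a random variable $X$ on $\mathbb{N}$ with $\Pr[X=x]=(\alpha_1\theta_1^x+\alpha_2\theta_2^x)/(\lambda\,x!)$; this is a bona fide pmf because $\sum_{x\ge 1}\theta_i^x/x!=\nu(\theta_i)$, which is also why the support should be $\mathbb{N}$ rather than $\mathbb{N}_0$, the displayed law being the particular case $\alpha_1=\alpha_2$.

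To conclude I would invoke the PGF computation already performed in the proof of Proposition~\ref{prop: bell_compound_poisson_family_member}: equation~\eqref{eq:deduction_pgf_compound} shows that $\exp\{-\lambda t(1-\gamma(s))\}$ is precisely the PGF of $\sum_{i=1}^{J(t)}X_i$, where $\{J(t)\}$ is a Poisson process of rate $\lambda$ independent of the i.i.d.\ sequence $\{X_i\}$ with the law above. By uniqueness of the probability generating function this gives the claimed equality in distribution, and the fact that the summation index genuinely evolves as a Poisson process (not merely has a Poisson marginal) follows from the stationary independent increments of $V$, inherited from $N_1$ and $N_2$. I do not anticipate a real obstacle: the only delicate point is bookkeeping of the mixing weights $\alpha_i\nu(\theta_i)/\lambda$, which disappear from the algebra only when $\alpha_1=\alpha_2$. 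A more computational alternative would imitate Proposition~\ref{prop:superposition_same_theta} and convolve the two Bell--Touchard mass functions directly, but that requires a two-variable Bell-polynomial identity and is less transparent than the generating-function route, so I would keep it in reserve for when an explicit closed form for $\Pr[V(t)=k]$ is also wanted.
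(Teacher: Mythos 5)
Your proof is correct and follows essentially the same route as the paper's: factor the PGF of $V(t)$ by independence, rewrite the exponent as $-\lambda t\,[1-\gamma(s)]$ where $\gamma$ is the $\lambda_i$-weighted mixture of the zero-truncated Poisson PGFs $\gamma_1,\gamma_2$ (with $\lambda_i=\alpha_i\nu(\theta_i)$), and conclude by uniqueness of the PGF, exactly as in the paper's use of \eqref{eq:deduction_pgf_compound}. Your side remark is also accurate: the mixture gives $\Pr[X=x]=(\alpha_1\theta_1^{x}+\alpha_2\theta_2^{x})/(\lambda\, x!)$ supported on $\mathbb{N}$, so the pmf displayed in the statement is literally correct only when $\alpha_1=\alpha_2$ and with $x\geq 1$ — a point the paper's proof leaves unaddressed since it stops at the mixture form of $\gamma$.
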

    \begin{proof}
        Let $\lambda_{i} = \alpha_{i}t(e^{\theta_{i}}-1)$. Replacing $\lambda_{i}$ in \eqref{eq:deduction_pgf_compound} we see that
        \begin{align*}
            G_{V(t)}(s)&=\exp\{\lambda_{1}(\gamma_{1}(s)-1)+\lambda_{2}(\gamma_{2}(s)-1)\}\\
            &=\exp\{\lambda_{1}\gamma_{1}(s)+\lambda_{2}\gamma_{2}(s)-\lambda_{1}-\lambda_{2})\}\\
            &=\exp\{-\lambda[1-\gamma(s)]\}
        \end{align*}
        where $\gamma(s) = (\lambda_{1}\gamma_{1}(s)+\lambda_{2}\gamma_{2}(s))/\lambda$ and  $\lambda = \lambda_{1} + \lambda_{2}$.
        
    \end{proof}
    
        Another interesting result about the Bell-Touchard process appears when one thins it. Consider a Bell-Touchard process $\{N(t),t\geq0\}$ with parameters $(\alpha, \theta)$. Suppose that each time a jump occurs, one classifies it as either a type I or a type II. Besides, suppose that $p:=\Pr[\text{type I}]=1-\Pr[\text{type II}]$. Let $N_{1} := N_1(t)$ denote the number of events of type I and  $N_{2} := N_2(t)$ the number of events of type II. We say that $N_{1} \text{ and } N_{2}$ are decompositions of the original process. 
    \begin{theorem}
        Let ${\{N(t), t\geq0\}}$ be a Bell-Touchard process with parameters $(\alpha,\theta)$. If $\{N_{1}(t),t\geq0\}$ is a decomposition, with rate $p$, of ${\{N(t), t\geq0\}}$, then $N_{1}(t)$ has probability mass function given by:
        \begin{equation}
            \Pr[N_{1}(t)=k] = \frac{(p\theta)^{k}}{k!}\exp\{-\alpha t \widetilde{\theta}(e^{p\theta}-1)\}B_{k}(\alpha t\widetilde{\theta}),
        \end{equation}
        where $\widetilde{\theta}:=e^{(1-p)\theta}$.
        \label{teo:bell_thinning}
    \end{theorem}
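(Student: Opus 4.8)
\medskip

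\noindent\textit{Proof strategy.} The plan is to reduce the thinning operation to an ordinary random-batch thinning of the compound Poisson representation of Proposition~\ref{prop: bell_compound_poisson_family_member}, and then to recognise the resulting probability generating function, via Proposition~\ref{prop:probability_generating_function}, as that of a Bell--Touchard law with shifted parameters.

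First I would invoke \eqref{eq:bell_compound_poisson_family_member} to write $N(t)\stackrel{d}{=}\sum_{i=1}^{M(t)}X_i$, where $\{M(t),t\ge0\}$ is a Poisson process of rate $\alpha(e^{\theta}-1)$ and the $X_i$ are i.i.d.\ with the zero-truncated Poisson generating function $\gamma$ of \eqref{eq: pgf_zero_truncated_poisson}. Since each individual event making up the $i$-th batch is, independently, of type~I with probability $p$, the number $Y_i$ of type~I events in that batch satisfies $Y_i\mid X_i\sim\mathrm{Binomial}(X_i,p)$; hence $N_1(t)\stackrel{d}{=}\sum_{i=1}^{M(t)}Y_i$ is again a compound Poisson sum driven by the same $M(t)$, now with batch generating function $\mathbb{E}[s^{Y_1}]=\mathbb{E}[(1-p+ps)^{X_1}]=\gamma(1-p+ps)$.

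Next I would feed this into the compound Poisson identity \eqref{eq:deduction_pgf_compound}, obtaining $G_{N_1(t)}(s)=\exp\{-\alpha t(e^{\theta}-1)[1-\gamma(1-p+ps)]\}$, and simplify. Because $\gamma(1-p+ps)=(e^{(1-p)\theta}e^{p\theta s}-1)/(e^{\theta}-1)$, the exponent collapses to $\alpha t(e^{(1-p)\theta}e^{p\theta s}-e^{\theta})=\alpha t\,\widetilde{\theta}\,(e^{(p\theta)s}-e^{p\theta})$ with $\widetilde{\theta}=e^{(1-p)\theta}$. By Proposition~\ref{prop:probability_generating_function} this is exactly the probability generating function of a $BT(\alpha t\,\widetilde{\theta},\,p\theta)$ random variable, so by uniqueness of generating functions together with the definition \eqref{eq:bell_touchard_distribution} one reads off $\Pr[N_1(t)=k]=\frac{(p\theta)^{k}}{k!}\exp\{-\alpha t\,\widetilde{\theta}(e^{p\theta}-1)\}B_k(\alpha t\,\widetilde{\theta})$, the asserted formula.

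I do not anticipate a genuine obstacle: the only point deserving care is the modelling step, namely that ``classifying each event'' must be read as an independent binomial thinning \emph{inside} each batch rather than as a thinning of the batch-arrival clock, since it is this that produces the multiplicative correction $\widetilde{\theta}=e^{(1-p)\theta}$ in the first parameter (a clock thinning would instead give $BT(p\alpha t,\theta)$). A self-contained alternative would avoid Proposition~\ref{prop: bell_compound_poisson_family_member} altogether: starting from Definition~\ref{def:bell_processes2}(3), condition on $N(t)=m$, sum the binomial weights $\binom{m}{k}p^{k}(1-p)^{m-k}$ against $e^{-\alpha t(e^{\theta}-1)}\theta^{m}B_m(\alpha t)/m!$, and resum the tail using the identity \eqref{eq:bell_differential_identity}; this also works but is computationally heavier, so I would present the generating-function argument.
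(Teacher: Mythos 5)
Your argument is correct, but it follows a different route from the paper. The paper works directly at the level of probability mass functions: it conditions on $N(t)=n$, writes $\Pr[N_1=k]=\sum_{n\ge k}\binom{n}{k}p^k(1-p)^{n-k}\,e^{-\alpha t(e^{\theta}-1)}\tfrac{\theta^n}{n!}B_n(\alpha t)$, shifts the summation index, and resums the tail with the $k$-shifted Bell identity \eqref{eq:bell_differential_identity}, i.e.\ exactly the ``computationally heavier'' alternative you sketch in your last paragraph. You instead pass through the compound Poisson representation of Proposition~\ref{prop: bell_compound_poisson_family_member}, thin each zero-truncated Poisson batch binomially, and identify the resulting generating function $\exp\{\alpha t\widetilde{\theta}(e^{p\theta s}-e^{p\theta})\}$ via Proposition~\ref{prop:probability_generating_function} and uniqueness of pgfs; your algebra checks out, and your modelling caveat is the right one --- the paper's proof indeed takes $N_1\mid N=n\sim\mathrm{Binomial}(n,p)$, i.e.\ classification of the counted events rather than of the batch-arrival clock, which matches your per-event thinning inside batches. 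What your route buys is economy and structure: it delivers the conclusion directly in the form $N_1(t)\sim BT(\alpha t\,\widetilde{\theta},\,p\theta)$, which is precisely the corollary the paper states afterwards, and it avoids any Bell-polynomial manipulation. What the paper's route buys is a self-contained pmf computation whose key step (the resummation via \eqref{eq:bell_differential_identity}) is reused verbatim right after the theorem to show that the increments $N_1(t)-N_1(s)$ have the same form, which is what establishes stationarity of the decomposed process; if you adopt the pgf route you would want to note that the same generating-function argument applies to $N_1(t)-N_1(s)$ (using $N(t)-N(s)\sim BT(\alpha(t-s),\theta)$) to recover that later claim. One small point of rigor worth a sentence in your write-up: the representation \eqref{eq:bell_compound_poisson_family_member} is an equality in distribution for fixed $t$, so you should observe that, conditional on the total count being $n$, the type-I count is $\mathrm{Binomial}(n,p)$ under both descriptions, which is what licenses transferring the thinning to the compound representation.
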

    \begin{proof}
        Note that the conditional distribution of $N_{1}$ given $N=n$ is binomial with parameters $n,p$. That is
        \begin{equation*}
            \Pr[N_{1}=k | N=n] = \binom{n}{k}p^{k}(1-p)^{n-k}.
        \end{equation*}
        Conditioning in $N$,
        \begin{align*}
            \Pr[N_{1}=k] &= \sum_{n=k}^{\infty}\Pr[N_{1}=k | N=n]\Pr[N=n]\\
            &=  \sum_{n=k}^{\infty}\binom{n}{k}p^{k}(1-p)^{n-k}\frac{\theta^{n}}{n!}e^{-\alpha t(e^{\theta}-1)}B_{n}(\alpha t)\\
            &= e^{-\alpha t(e^{\theta}-1)}\frac{(p\theta)^{k}}{k!}\sum_{n=k}^{\infty}(1-p)^{n-k}\frac{\theta^{n-k}}{(n-k)!}B_{n}(\alpha t)\\
            &= e^{-\alpha t(e^{\theta}-1)}\frac{(p\theta)^{k}}{k!}\sum_{n=0}^{\infty}B_{n+k}(\alpha t)\frac{((1-p)\theta)^{n}}{n!},
        \end{align*}
        but from \eqref{eq:bell_differential_identity}
        \begin{equation*}
          \Pr[N_{1}=k] =  e^{-\alpha t(e^{\theta}-1)}\frac{(p\theta)^{k}}{k!}B_{k}(\alpha te^{(1-p)\theta})\varphi_{\alpha t}((1-p)\theta),
        \end{equation*}
        and considering \eqref{eq:bell_generating_function}, one has that $\varphi_{\alpha t}((1-p)\theta) = e^{\alpha t[e^{(1-p)\theta}-1]}$, hence
        \begin{align*}
            \Pr[N_{1}=k] &=  e^{-\alpha t(e^{\theta}-1)}\frac{(p\theta)^{k}}{k!}B_{k}(\alpha te^{(1-p)\theta})e^{\alpha t[e^{(1-p)\theta}-1]}\\
            &=\frac{(p\theta)^{k}}{k!}e^{-\alpha te^{(1-p)\theta}(e^{p\theta}-1)}B_{k}(\alpha te^{(1-p)\theta})
        \end{align*}
        and we finish the proof setting $\widetilde{\theta}:=e^{(1-p)\theta}$.
        
    \end{proof}
    A consequence of the Theorem \ref{teo:bell_thinning} is that $\{N_{1}(t), t\geq0\}$ and $\{N_{2}(t), t\geq0\}$ are Bell-Touchard process. However, they are not independent process and one can check this finding the joint probability mass function of  $(N_{1}(t),N_{2}(t))$. Consider 
    \begin{equation*}
        N(t) = N_{1}(t)+N_{2}(t).
    \end{equation*}
    And let $n$, $k$ be positive integers. Note that
    \begin{equation*}
        \Pr[N_{1}(t)=k,N_{2}(t)=n] = \Pr[N_{1}(t)=k,N_{2}(t)=n \vert N(t)=n+k]\Pr[N(t)=n+k].
    \end{equation*}
    Given that $N(t) = n+k$, considering that events of type I or II occur independently of any other events, the number of events of type I follow a binomial distribution parameters $n+k \text{ and } p$. Thus
    \begin{equation*}
        \Pr[N_{1}(t)=k,N_{2}(t)=n \vert N(t)=n+k] = \binom{n+k}{k}p^{k}(1-p)^{n}.
    \end{equation*}
    Combining these results,
    \begin{align}
        \Pr[N_{1}(t)=k,N_{2}(t)=n] &= \binom{n+k}{k}p^{k}(1-p)^{n}e^{-\alpha t(e^{\theta}-1)}\frac{(\alpha t)^{n+k}}{(n+k)!}B_{n+k}(\alpha t).
        \label{eq:join_bell}
    \end{align}
    However, one can not write $B_{n+k}(\alpha t)$ as an independent product of Bell polynomials $B_{n}(\alpha t)B_{k}(\alpha t)$, therefore one can not have $\Pr[N_{1}(t)=k,N_{2}(t)=n] = \Pr[N_{1}(t)=k]\Pr[N_{2}(t)=n]$, implying that $N_{1}(t) \text{ and } N_{2}(t)$ are not independent.
    \begin{remark}
    Another strategy for proving the lack of independence is using the moment generating function of \eqref{eq:join_bell}.
    \begin{equation}
        M_{N_{1},N_{2}}(t_1,t_2) = \exp\{\alpha t[e^{\theta(pe^{t_2}+qe^{t_1})}-e^{\theta}]\}, \text{ where } q = 1-p.
        \label{eq:join_mgf}
    \end{equation}
    One can check that
    \begin{align*}
        M_{N_{1}}(t_1) = M_{N_{1},N_{2}}(t_1,0) &= \exp\{\alpha t[e^{\theta(p+qe^{t_1})}-e^{\theta}]\} \\
        M_{N_{2}}(t_2) = M_{N_{1},N_{2}}(0,t_2) &= \exp\{\alpha t[e^{\theta(pe^{t_2}+q)}-e^{\theta}]\}.
    \end{align*}
    Now one can use the fact that $N_1$ e $N_2$ are independent if, and only if, $M_{N_{1},N_{2}}(t_1,t_2)=M_{N_{1}}(t_1)M_{N_{2}}(t_2)$, which is not the case. (see \cite[chapter~8]{schinazi2011probability}).
    \end{remark}
    
    In orther to show that $N_{1}(t)$ is a Bell-Touchard process we have to prove that $\{N_{1}(t), t\geq0\}$ has stationay and independent increments. Therefrom, for $s < t$ and any $k \in \mathbb{N}$
    \begin{equation*}
        \Pr[N_{1}(t)-N_{1}(s)=k] = \sum_{n\geq0}\Pr[N_{1}(t)-N_{1}(s)=k,N(t)-N(s)=n+k].
    \end{equation*}
    Conditioning in $N(s)-N(t)=n+k$, the distribution of $N_{1}(t)-N_{1}(s)$ is binomial with parameters $n+k$ e $p$. Thus
    \begin{align*}
         \Pr[N_{1}(t)-N_{1}(s)=k] &= \sum_{n\geq0}\binom{n+k}{k}p^{k}(1-p)^{n}\Pr[N(t)-N(s)=n+k],
    \end{align*}
    but as $N(s)-N(t)\thicksim BT(\alpha(t-s),\theta)$, it follows that
    \begin{align*}
         \Pr[N_{1}(t)-N_{1}(s)=k] &= \sum_{n\geq0}\binom{n+k}{k}p^{k}(1-p)^{n}\exp[-\alpha(t-s)(e^{\theta}-1)]\frac{\theta^{n+k}}{(n+k)!}B_{n+k}(\alpha(t-s)),\\
         &=\exp[-\alpha(t-s)\widetilde{\theta}(e^{p\theta}-1)]\frac{(p\theta)^{k}}{k!}B_{k}(\alpha(t-s)\widetilde{\theta}),
    \end{align*}
    where the result follows from the same strategy applied in the proof of Theorem \ref{teo:bell_thinning}, setting $\widetilde{\theta} :=e^{(1-p)\theta}$. This shows that the distribution of  $N_{1}(t)-N_{1}(s)$ depends only on the length of the interval $(s,t]$. We summarize this in a corollary.
    \begin{corollary}
        $\{N_{1}(t),t\geq0\}$ is a Bell-Touchard process with parameters $(\alpha^{*},\theta^{*})$. Where $\alpha^{*}:=\alpha e^{(1-p)\theta} \text{ and } \theta^{*}:=p\theta$, with $p\in(0,1)$.
    \end{corollary}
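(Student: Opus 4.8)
The plan is to verify the three conditions of Definition \ref{def:bell_processes2} for $\{N_1(t),t\ge 0\}$ with $\alpha^{*}=\alpha e^{(1-p)\theta}=\alpha\widetilde{\theta}$ and $\theta^{*}=p\theta$. Condition (1), $N_1(0)=0$, is immediate since $0\le N_1(0)\le N(0)=0$. Condition (3) is essentially already in hand: Theorem \ref{teo:bell_thinning} gives the marginal law of $N_1(t)$ as $BT(\alpha t\widetilde{\theta},p\theta)$, and substituting $\alpha^{*}$ and $\theta^{*}$ rewrites its mass function as $e^{-\alpha^{*}t(e^{\theta^{*}}-1)}(\theta^{*})^{k}B_{k}(\alpha^{*}t)/k!$, exactly the form required. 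So the real substance of the proof is condition (2), stationary and independent increments.

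For stationarity I would reuse the computation displayed just before the corollary: for $s<t$, conditioning on $N(t)-N(s)=n+k$ (binomial thinning with parameters $n+k$ and $p$), using that $N(t)-N(s)\thicksim BT(\alpha(t-s),\theta)$ by stationarity of the increments of $N$, and then collapsing the resulting series via the shifted-Bell identity \eqref{eq:bell_differential_identity} together with \eqref{eq:bell_generating_function}, shows that $\Pr[N_1(t)-N_1(s)=k]$ equals the $BT(\alpha^{*}(t-s),\theta^{*})$ mass function, hence depends on $(s,t]$ only through its length $t-s$.

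For independence, fix disjoint intervals $(s_1,t_1],\dots,(s_m,t_m]$ and condition on the vector $\big(N(t_j)-N(s_j)\big)_{j\le m}$, whose components are independent because $N$ has independent increments. Since each jump of $N$ is classified as type~I independently of everything else with probability $p$, conditionally on that vector the increments $\big(N_1(t_j)-N_1(s_j)\big)_{j\le m}$ are independent binomials, so the conditional joint probability generating function factorizes; averaging over the (independent) conditioning vector, the unconditional joint pgf of $\big(N_1(t_j)-N_1(s_j)\big)_{j\le m}$ factorizes into the product of the marginal pgf's, which yields independent increments. The hard part here is purely a matter of rigour: one must first set up the thinning so that the family of all type labels along the whole path is jointly independent and independent of $N$, and then justify the interchange of summation and product; with that in place the factorization argument is routine. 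Having verified conditions (1)--(3), we conclude that $\{N_1(t),t\ge0\}$ is a Bell-Touchard process with parameters $(\alpha^{*},\theta^{*})$, and, by replacing $p$ with $1-p$, the same reasoning shows $\{N_2(t),t\ge0\}$ is a Bell-Touchard process with parameters $(\alpha e^{p\theta},(1-p)\theta)$.
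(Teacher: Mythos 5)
Your proposal is correct and follows essentially the paper's own route: the core of the argument — conditioning the increment $N_1(t)-N_1(s)$ on $N(t)-N(s)=n+k$, using binomial thinning and $N(t)-N(s)\thicksim BT(\alpha(t-s),\theta)$, and collapsing the series via \eqref{eq:bell_differential_identity} and \eqref{eq:bell_generating_function} — is exactly the in-text computation the paper gives just before the corollary, with the marginal law supplied by Theorem \ref{teo:bell_thinning}. You in fact go slightly beyond the paper, which only verifies that the increment distribution depends on the interval length and leaves the independent-increments condition unproved; your conditioning argument on the independent increments of $N$, with labels i.i.d. and independent of $N$, correctly fills that gap.
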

    
    The Theorem \ref{teo:bell_thinning} can be generalized for any finite number of decompositions.
    
    \begin{corollary}
        Let $\{N_{i}(t), t\geq0\}$ be the $i^{\text{th}}$ decomposition of a Bell-Touchard process $\{N(t), t\geq0\}$ with parameters $(\alpha, \theta)$, such that the proportion of events of the $i^{\text{th}}$ decomposition is denoted by $p_i:=\Pr[i^{\text{th}} \text{ type event}]$, where $i\in \{1,\dots,n\}$ and $\sum_{i=1}^{n}p_{i}=1$. Therefore $\{N_{i}(t),t\geq0\}$ is a Bell-Touchard process with parameters $\alpha^{*}_{i}:=\alpha e^{(1-p_{i})\theta} \text{ and } \theta^{*}_{i}:=p_{i}\theta$.
    \end{corollary}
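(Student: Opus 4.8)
The plan is to reduce the $n$-type case to the two-type decomposition already treated in Theorem~\ref{teo:bell_thinning} and the Corollary that follows it. Fix $i\in\{1,\dots,n\}$. Each jump of $\{N(t),t\geq0\}$ is independently assigned one of the $n$ labels, label $j$ with probability $p_j$. Collapse all labels different from $i$ into a single label ``not $i$'': then each jump is, independently of the others, of type $i$ with probability $p_i$ and of type ``not $i$'' with probability $1-p_i$. Hence $\{N_i(t),t\geq0\}$ is exactly a decomposition, with rate $p_i$, of the Bell-Touchard process $\{N(t),t\geq0\}$ in the sense of the paragraph preceding Theorem~\ref{teo:bell_thinning}, and $N(t)-N_i(t)$ is the complementary decomposition.

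First I would make the collapsing step precise: conditional on $N(t)=m$, the count vector $(N_1(t),\dots,N_n(t))$ is multinomial with parameters $m;p_1,\dots,p_n$, so the marginal law of the single coordinate $N_i(t)$ is binomial with parameters $m,p_i$ — this is exactly the conditional law $\Pr[N_1=k\mid N=n]$ used at the start of the proof of Theorem~\ref{teo:bell_thinning}. The identical remark applies to increments: conditional on $N(t)-N(s)=m$, the variable $N_i(t)-N_i(s)$ is binomial with parameters $m,p_i$, which is precisely what the computation establishing stationary and independent increments for the two-type case (the discussion between the Remark and the Corollary following Theorem~\ref{teo:bell_thinning}) requires.

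Once this reduction is in place, I would simply invoke Theorem~\ref{teo:bell_thinning} and the Corollary following it with $p$ replaced by $p_i$: they give $N_i(0)=0$, stationary and independent increments, and
\[
\Pr[N_i(t)=k]=\frac{(p_i\theta)^{k}}{k!}\exp\{-\alpha t\,e^{(1-p_i)\theta}(e^{p_i\theta}-1)\}B_{k}(\alpha t\,e^{(1-p_i)\theta}),
\]
that is, $N_i(t)\thicksim BT(\alpha e^{(1-p_i)\theta}\,t,\;p_i\theta)$. Comparing with Definition~\ref{def:bell_processes2}, this says $\{N_i(t),t\geq0\}$ is a Bell-Touchard process with parameters $\alpha_i^{*}=\alpha e^{(1-p_i)\theta}$ and $\theta_i^{*}=p_i\theta$, which is the assertion. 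The only delicate point is the marginalization/lumping step — one must check that ``independently classify each jump'' genuinely produces a multinomial conditional law and that merging categories preserves the independent-classification structure needed to quote the two-type results; after that, no fresh manipulation of Bell polynomials is required.
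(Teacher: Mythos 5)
Your reduction is correct and is exactly the argument the paper intends: the paper states this corollary without a separate proof, treating it as the immediate consequence of Theorem \ref{teo:bell_thinning} and its corollary obtained by lumping all types other than $i$ into a single complementary type, so that conditional on $N(t)=m$ (or on an increment) the count $N_i$ is binomial with parameters $m,p_i$. Your explicit multinomial-marginalization step just makes that lumping precise, so nothing further is needed.
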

    
    \begin{definition}
        Consider two independent homogeneous Poisson process $\{N_{1}(t),t\geq0\}$ with parameter $\nu > 0$ and  $\{N_{2}(t),t\geq0\}$ with parameter $\omega>0$. The process  $\{\Hat{N}(t),t>0\}$ where $\Hat{N}(t) :=N_{1}(N_{2}(t))\text{, for } t>0$ is called iterated Poisson process.
    \end{definition}
     The process $\{\Hat{N}(t),t>0\}$ is generated composing $\{N_{1}(t),t\geq0\}$ and $\{N_{2}(t),t\geq0\}$. Furthermore, its path takes place on the path of the inside process. One can check more about these results in \cite{orsingher2012compositions,di2015compound}.
     
    \begin{theorem}
         The iterated Poisson process $\{\Hat{N}(t),t>0\}$ is a Bell-Touchard process with parameters ${\alpha:=\omega e^{-\nu}}$ and $ \theta:=\nu$.
         \label{teo:poisson_composition}
    \end{theorem}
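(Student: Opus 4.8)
The plan is to verify the three conditions of Definition \ref{def:bell_processes2} for $\{\hat{N}(t),\,t>0\}$. Condition (1) is immediate: $\hat{N}(0)=N_{1}(N_{2}(0))=N_{1}(0)=0$. For condition (2), I would use that a homogeneous Poisson process subordinated by an independent homogeneous Poisson process inherits stationary and independent increments. Concretely, for $0\le t_{0}<t_{1}<\dots<t_{m}$ the increments $N_{2}(t_{j})-N_{2}(t_{j-1})$ are independent and stationary; conditioning on this vector, the increments $\hat{N}(t_{j})-\hat{N}(t_{j-1})=N_{1}(N_{2}(t_{j}))-N_{1}(N_{2}(t_{j-1}))$ are, by the independent-increments property of $N_{1}$ and its independence from $N_{2}$, independent Poisson variables whose means depend only on the lengths $t_{j}-t_{j-1}$; averaging over the conditioning vector preserves both independence and stationarity.

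The core of the argument is condition (3), which I would obtain by conditioning on $N_{2}(t)$. Since, given $N_{2}(t)=n$, the variable $\hat{N}(t)=N_{1}(n)$ is Poisson with mean $\nu n$,
\begin{equation*}
\Pr[\hat{N}(t)=k]=\sum_{n=0}^{\infty}e^{-\nu n}\frac{(\nu n)^{k}}{k!}\,e^{-\omega t}\frac{(\omega t)^{n}}{n!}=\frac{\nu^{k}e^{-\omega t}}{k!}\sum_{n=0}^{\infty}\frac{n^{k}\,(\omega t e^{-\nu})^{n}}{n!}.
\end{equation*}
Reading \eqref{eq:dobinski_formula} as $\sum_{n\ge0}n^{k}x^{n}/n!=e^{x}B_{k}(x)$ with $x=\omega t e^{-\nu}$ gives
\begin{equation*}
\Pr[\hat{N}(t)=k]=\frac{\nu^{k}}{k!}\,e^{-\omega t(1-e^{-\nu})}B_{k}(\omega t e^{-\nu})=e^{-\alpha t(e^{\theta}-1)}\frac{\theta^{k}}{k!}B_{k}(\alpha t)
\end{equation*}
once we set $\alpha=\omega e^{-\nu}$ and $\theta=\nu$, which is exactly Definition \ref{def:bell_processes2}(3). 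Alternatively, the identification can be made at the level of generating functions: $G_{\hat{N}(t)}(s)=\mathbb{E}\big[\mathbb{E}[s^{N_{1}(N_{2}(t))}\mid N_{2}(t)]\big]=\mathbb{E}\big[e^{\nu N_{2}(t)(s-1)}\big]=\exp\{\omega t(e^{\nu(s-1)}-1)\}=\exp\{\alpha t(e^{s\theta}-e^{\theta})\}$, which is the probability generating function of $BT(\alpha t,\theta)$ by \eqref{eq:bell_probability_generating_function}, so Theorem \ref{teo:bell_touchard_theorem} (together with the uniqueness of generating functions) finishes the proof.

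I expect the only delicate point to be condition (2): one must argue that subordination produces genuinely independent increments, and this is precisely where the independence of $N_{1}$ from $N_{2}$ and the conditioning on the increments of $N_{2}$ are both needed. The remaining steps are routine manipulations of the Poisson mass function combined with \eqref{eq:dobinski_formula}.
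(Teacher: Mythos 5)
Your proof is correct, and its central computation --- conditioning on $N_{2}(t)$ and invoking Dobinski's formula \eqref{eq:dobinski_formula} to recognize $e^{-\omega t(1-e^{-\nu})}\frac{\nu^{k}}{k!}B_{k}(\omega t e^{-\nu})$ as the $BT(\omega e^{-\nu}t,\nu)$ mass function --- is exactly the argument the paper gives. The only difference is that you also verify conditions (1)--(2) of Definition~\ref{def:bell_processes2} (the increment structure inherited under subordination), which the paper leaves implicit, so your write-up is if anything slightly more complete.
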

    \begin{proof}
        Note that 
        \begin{equation*}
            \Pr[\Hat{N}(t)=k]=\sum_{r=0}^{\infty}\Pr[N_{1}(r)=k\lvert N_{2}(t)=r]\Pr[N_{2}(t)=r],
        \end{equation*}
        but as the process are independent
        \begin{align*}
            \Pr[\Hat{N}(t)=k]&=\sum_{r=0}^{\infty}\Pr[N_{1}(r)=k]\Pr[N_{2}(t)=r],\\
                            &=\sum_{r=0}^{\infty}\frac{e^{-\nu r}(\nu r)^{k}}{k!}\frac{e^{-\omega t}(\omega t)^{r}}{r!},\\
                            &=\frac{e^{-\omega t}\nu^{k}}{k!}\sum_{r=0}^{\infty}\frac{r^{k}(\omega te^{-\nu})^{r}}{r!},
        \end{align*}
        multiplying the last expression by $e^{-\omega te^{-\nu}}/e^{-\omega te^{-\nu}}$ and using \eqref{eq:dobinski_formula}, we have
        \begin{equation*}
             \Pr[\Hat{N}(t)=k] = e^{-\omega te^{-\nu}[e^{\nu}-1]}\frac{\nu^{k}}{k!}B_{k}(\omega te^{-\nu}), \qquad k \geq0,t>0.
        \end{equation*}
        The last expression corresponds to the Bell-Touchard probability mass function with $\alpha:=\omega e^{-\nu}\text{ and } \theta:=\nu$.
    \end{proof}

\subsection{Generalizations}

\begin{definition}
    A counting process $\{N(t), \,t\geq 0\}$ is a nonhomogeneous Bell-Touchard process with parameters $(\alpha(t), \theta)$, where $\alpha(t):\mathbb{R}\rightarrow\mathbb{R}_{+}$, if the following assumptions hold:
	\begin{enumerate}
	    \item $N(0) = 0$;
	    \item $\{N(t), \,t\geq 0\}$ has independent increments;
	    \item $ \Pr[N(t+s)-N(t) = k] = \alpha(t) s{\theta^{k}}/{k!} + o(s).$
	\end{enumerate}
	\label{def:nonhomogeneous_bell_processes}
    \end{definition}
    
    Note that we fix $\theta$ and define $\alpha(t)$ as a function of time. We also define the average jumps function as follows.
    \begin{equation*}
        m(t) := \int_{0}^{t}\alpha(w)dw.
    \end{equation*}
    
    \begin{lemma}
    Let $\{N(t), t\geq0\}$ be a nonhomogeneous Bell-Touchard process and $N_{t}(s)=N(t+s)-N(t)$ with $s\geq0$. Thus $\{N_{t}(s), t\geq0\}$ is a nonhomogeneous Bell-Touchard process as well, where $\alpha_{t}(s)=\alpha(t+s)$.
        \label{lemma: nonhomogeneous_shifted_bell_touchard}
    \end{lemma}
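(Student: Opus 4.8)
The plan is to verify the three defining properties of Definition~\ref{def:nonhomogeneous_bell_processes} directly for the shifted family $\{N_{t}(s),\,s\geq 0\}$ with candidate intensity $\alpha_{t}(s):=\alpha(t+s)$, reducing each property to the corresponding property of the original process $\{N(t),\,t\geq 0\}$ by means of the identity $N_{t}(s)=N(t+s)-N(t)$. The time variable $\theta$ plays no active role and simply carries over unchanged.

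First I would dispose of the initial condition: $N_{t}(0)=N(t+0)-N(t)=0$, which is immediate. Next, for independent increments, I would take disjoint intervals $(s_{1},s_{2}]$ and $(s_{3},s_{4}]$ (and, more generally, any finite disjoint collection) in the $s$--variable and note that
\[
N_{t}(s_{2})-N_{t}(s_{1})=N(t+s_{2})-N(t+s_{1}),\qquad N_{t}(s_{4})-N_{t}(s_{3})=N(t+s_{4})-N(t+s_{3}),
\]
so these are increments of $N$ over the disjoint intervals $(t+s_{1},t+s_{2}]$ and $(t+s_{3},t+s_{4}]$; their independence is then exactly the independent--increments hypothesis on $\{N(t),\,t\geq 0\}$ from Definition~\ref{def:nonhomogeneous_bell_processes}(2).

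Finally, for the infinitesimal jump condition, I would write, for $k\in\mathbb{N}$ and $h>0$,
\[
\Pr[N_{t}(s+h)-N_{t}(s)=k]=\Pr[N(t+s+h)-N(t+s)=k],
\]
and apply Definition~\ref{def:nonhomogeneous_bell_processes}(3) to $\{N(t),\,t\geq 0\}$ at the time point $t+s$ with increment $h$, obtaining $\alpha(t+s)\,h\,\theta^{k}/k!+o(h)=\alpha_{t}(s)\,h\,\theta^{k}/k!+o(h)$, which is precisely the required estimate for the shifted process. This completes the verification; as a byproduct the associated average--jumps function is $m_{t}(s)=\int_{0}^{s}\alpha(t+w)\,dw=m(t+s)-m(t)$. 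There is no genuine obstacle here: the argument is a careful unwinding of the definitions, and the only point demanding attention is bookkeeping the time shift consistently and reading every $o(\cdot)$ as $h\to 0$ with $t$ and $s$ held fixed, so that no new asymptotic estimates are needed.
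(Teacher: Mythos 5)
Your proof is correct: the paper actually states this lemma without giving any proof (it passes directly to the computation of the average jumps function), and your direct verification of the three conditions of Definition~\ref{def:nonhomogeneous_bell_processes} for $\{N_t(s),\,s\geq 0\}$ -- the initial value, independence of increments via the time shift, and the infinitesimal condition applied at the point $t+s$ -- is exactly the argument the lemma requires. As a bonus, your byproduct $m_t(s)=m(t+s)-m(t)$ is the correct form (the paper's displayed $m(t+s)-m(s)$ is a typo); only your passing description of $\theta$ as a ``time variable'' should be amended, since $\theta$ is a fixed parameter.
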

    The average jumps function of $N_{t}(s)$ is given by
    \begin{align*}
        m_{t}(s) &= \int_{0}^{s}\alpha_{t}(w)dw\\
        &=\int_{0}^{s}\alpha(t+w)dw\\
        &=\int_{t}^{t+s}\alpha(u)du\\
        m_{t}(s) &= m(t+s)-m(s).
    \end{align*}
    Considering that, we present the next result.
    \begin{theorem}
        Let $\{N(t), t\geq0\}$ be a nonhomogeneous Bell-Touchard process with parameters $(\alpha(t), \theta)$. Then $N(t)$ has probability mass function given by
        \begin{equation}
            \Pr[N(t)=k]=\exp\{-m(t)(e^{\theta}-1)\}\frac{\theta^{k}}{k!}B_{k}(m(t)).
        \end{equation}
        \label{teo:nonhomogeneos_bell_touchard}
    \end{theorem}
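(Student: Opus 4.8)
The plan is to mimic the proof of Theorem~\ref{teo:bell_touchard_theorem}, replacing the constant rate $\alpha$ by the time-dependent rate $\alpha(t)$ and working with the Laplace transform $g(t):=\mathbb{E}[\exp\{-uN(t)\}]$, $u\ge 0$; since $N(0)=0$ we have $g(0)=1$. First I would use the independent-increments assumption of Definition~\ref{def:nonhomogeneous_bell_processes}(2) to write, for $s>0$,
$$g(t+s)=\mathbb{E}[\exp\{-uN(t)\}]\,\mathbb{E}[\exp\{-u(N(t+s)-N(t))\}]=g(t)\,\mathbb{E}[\exp\{-uN_{t}(s)\}],$$
exactly as in \eqref{eq:dif_laplace_1}, the difference being that stationarity is no longer available, so the second factor genuinely depends on $t$ through $\alpha(t)$; alternatively one may read this factorization off Lemma~\ref{lemma: nonhomogeneous_shifted_bell_touchard}.

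Next I would compute that second factor from Definition~\ref{def:nonhomogeneous_bell_processes}(3). Since $\Pr[N_{t}(s)=0]=1-\alpha(t)s(e^{\theta}-1)+o(s)$ and $\Pr[N_{t}(s)=k]=\alpha(t)s\,\theta^{k}/k!+o(s)$ for $k\ge 1$, summing the series as in \eqref{eq:dif_laplace_2} gives
$$\mathbb{E}[\exp\{-uN_{t}(s)\}]=1+\alpha(t)s\big[e^{\theta e^{-u}}-e^{\theta}\big]+o(s).$$
Substituting into the product above, subtracting $g(t)$, dividing by $s$ and letting $s\to 0$ yields the linear ODE
$$\frac{dg(t)}{dt}=\alpha(t)\big[e^{\theta e^{-u}}-e^{\theta}\big]\,g(t),\qquad g(0)=1,$$
whose solution is $g(t)=\exp\{m(t)[e^{\theta e^{-u}}-e^{\theta}]\}=\exp\{m(t)e^{\theta}[e^{\theta(e^{-u}-1)}-1]\}$ since $m(t)=\int_{0}^{t}\alpha(w)\,dw$. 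By Proposition~\ref{prop:moment_generating_function} this is exactly $M_{Y}(-u)$ for $Y\sim BT(m(t),\theta)$, i.e. the Laplace transform of a $BT(m(t),\theta)$ random variable; as the Laplace transform determines the law, $N(t)\sim BT(m(t),\theta)$, and writing out \eqref{eq:bell_touchard_distribution} with $\alpha$ replaced by $m(t)$ gives the asserted formula.

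The series summation and the ODE are routine; the only step requiring care is the passage $s\to 0$ in the difference quotient, which needs the error term $o(s)$ to be controlled uniformly in $t$ over compact time intervals. This holds under the implicit regularity of $\alpha(\cdot)$ (local boundedness/continuity, which is also what makes $m(t)$ well defined), and it is the exact analogue of the tacit smoothness already used in Theorem~\ref{teo:bell_touchard_theorem}. If one wished to avoid even mentioning this, an alternative is to derive the Kolmogorov forward equations for $p_{k}(t):=\Pr[N(t)=k]$ from Definition~\ref{def:nonhomogeneous_bell_processes}(3) and verify directly that the proposed mass function solves them with $p_{k}(0)=\mathbf{1}\{k=0\}$; I would present the generating-function argument, as it is shorter and parallels the homogeneous case.
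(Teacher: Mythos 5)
Your proposal is correct and follows essentially the same route as the paper: the Laplace transform $g(t)=\mathbb{E}[\exp\{-uN(t)\}]$, factorization via independent increments and Lemma~\ref{lemma: nonhomogeneous_shifted_bell_touchard}, the expansion of $\mathbb{E}[\exp\{-uN_{t}(s)\}]$ from Definition~\ref{def:nonhomogeneous_bell_processes}(3), and the resulting ODE solved with $g(0)=1$ to identify the law as $BT(m(t),\theta)$. Your added remarks on uniformity of the $o(s)$ term and the alternative via forward equations go slightly beyond the paper's argument but do not change the approach.
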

    
    \begin{proof}
        Following the same fashion used in the proof of Theorem \ref{teo:bell_touchard_theorem}, writing
        \begin{equation*}
            g(t) = \mathbb{E}[\exp\{-uN(t)\}].
        \end{equation*}
        We need to find and solve a differential equation for $g(t)$.
       \begin{equation}
           \begin{aligned}[b]
              g(t + s) &= \mathbb{E}[\exp\{-uN(t+s)\}] \\
                &= \mathbb{E}[\exp\{-u(N(t)+N(t+s)-N(t))\}]\\
                &= \mathbb{E}[\exp\{-uN(t)\}\exp\{-u(N(t+s)-N(t))\}]\\
                &=g(t)\mathbb{E}[\exp\{-uN_{t}(s)\}],
           \end{aligned}
           \label{eq:nonhomogeneous_dif_laplace_1}
       \end{equation}
       where the last step is due to the independent increments assumption. AS a consequence of assumption (3) from the Definition \ref{def:nonhomogeneous_bell_processes}, we have 
       \begin{equation*}
        \Pr[N_{t}(s)=0] = 1 - \alpha(s+t)s(e^{\theta}-1)+o(s).
       \end{equation*}
       Conditioning in $N_{t}(s) = k,\forall k \in \mathbb{N}_{0} $ and using the Lemma \ref{lemma: nonhomogeneous_shifted_bell_touchard}       \begin{equation}
           \begin{aligned}[b]
              \mathbb{E}[\exp\{-uN_{t}(s)\}] &= \sum_{k=0}^{\infty}e^{-uk}\Pr[N_{t}(s) = k]\\
              &= 1 - \alpha(t+s) s(e^{\theta}-1)+o(s) + \sum_{k=1}^{\infty}e^{-uk}\alpha(t+s)s\frac{\theta^{k}}{k!}\\
              &= 1 + \alpha(t+s)s[e^{\theta e^{-u}}-e^{\theta}] + o(s)
           \end{aligned}
           \label{eq:nonhomogeneous_dif_laplace_2}
       \end{equation}
       Thus, considering $\eqref{eq:nonhomogeneous_dif_laplace_1}$ and $\eqref{eq:nonhomogeneous_dif_laplace_2}$, we have that
       \begin{equation*}
           g(t+s) = g(t)(1 + \alpha(t+s)s[e^{\theta e^{-u}}-e^{\theta}]) + o(s), 
       \end{equation*}
       hence
       \begin{equation*}
           \frac{g(t+s)-g(t)}{s} = g(t) \alpha(t+s)[e^{\theta e^{-u}}-e^{\theta}] + \frac{o(s)}{s}
       \end{equation*}
       and taking $s\rightarrow 0$, we get
       \begin{equation*}
           \frac{dg(t)}{dt} = g(t)\alpha(t)[e^{\theta e^{-u}}-e^{\theta}].
       \end{equation*}
       Solving the differential equation with initial condition $g(0)=1$ we have that
       \begin{equation*}
           g(t) = \exp\{m(t)e^{\theta}[e^{\theta (e^{-u}-1)}-1]\}
       \end{equation*}
       which corresponds to the Laplace transform of a Bell-Touchard random variable $X \thicksim BT(m(t), \theta)$, where $\alpha \text{ is replaced by } m(t)$.
       
    \end{proof}
    A consequence of Theorem \ref{teo:nonhomogeneos_bell_touchard} and Proposition \ref{prop: bell_compound_poisson_family_member} is that $\{N(t), t\geq0\}$ is also a nonhomogeneous Poisson process. One can check this using probability generating functions. 
    
    \begin{theorem}
         Consider the process $\{\Hat{\mathcal{N}}(t),t>0\}$ with $\Hat{\mathcal{N}}(t) = N_{1}(\mathcal{N}_{2}(t))$ where $\{N_{1}(t),t\geq0\}$ is a Poisson process with parameter $\nu>0$ and $\{\mathcal{N}_{2}(t),t\geq0\}$ is a nonhomogeneous Poisson process with parameter $\lambda(t),t>0$ (both independents). Therefore, $\{\Hat{\mathcal{N}}(t),t>0\}$ is a nonhomogeneous Bell-Touchard process with parameters $\alpha:=m(t) e^{-\nu}\text{ and } \theta:=\nu$.
         \label{teo:poisson_composition_nonhomogeneous}
    \end{theorem}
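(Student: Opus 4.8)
The plan is to follow closely the argument of Theorem~\ref{teo:poisson_composition}, replacing the constant mean $\omega t$ of the inner homogeneous process there by the mean function $m(t):=\int_0^t\lambda(w)\,dw$ of the nonhomogeneous process $\mathcal{N}_2$. First I would condition on the value of $\mathcal{N}_2(t)$: since $N_1$ and $\mathcal{N}_2$ are independent,
\[
\Pr[\Hat{\mathcal{N}}(t)=k]=\sum_{r=0}^{\infty}\Pr[N_1(r)=k]\,\Pr[\mathcal{N}_2(t)=r]
=\sum_{r=0}^{\infty}\frac{e^{-\nu r}(\nu r)^{k}}{k!}\,\frac{e^{-m(t)}m(t)^{r}}{r!}
=\frac{e^{-m(t)}\nu^{k}}{k!}\sum_{r=0}^{\infty}\frac{r^{k}\,(m(t)e^{-\nu})^{r}}{r!}.
\]
Multiplying and dividing by $e^{-m(t)e^{-\nu}}$ and invoking \eqref{eq:dobinski_formula} gives
\[
\Pr[\Hat{\mathcal{N}}(t)=k]=e^{-m(t)e^{-\nu}(e^{\nu}-1)}\,\frac{\nu^{k}}{k!}\,B_{k}\!\big(m(t)e^{-\nu}\big),\qquad k\ge 0,\ t>0,
\]
that is, $\Hat{\mathcal{N}}(t)\sim BT\big(m(t)e^{-\nu},\nu\big)$, which is the marginal law claimed in the statement (here the mean function of the resulting process, in the sense of Definition~\ref{def:nonhomogeneous_bell_processes}, is $m(t)e^{-\nu}$, so its local rate is $\alpha(t)=\lambda(t)e^{-\nu}$ and its second parameter is $\theta=\nu$; the consistency of these two identifications is worth recording). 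Comparison with Theorem~\ref{teo:nonhomogeneos_bell_touchard} then confirms this is exactly the marginal of a nonhomogeneous Bell--Touchard process with the stated parameters.

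Next I would check the three axioms of Definition~\ref{def:nonhomogeneous_bell_processes}. Axiom~(1) is immediate, since $\Hat{\mathcal{N}}(0)=N_1(\mathcal{N}_2(0))=N_1(0)=0$. For axiom~(3) I would condition on the number $j$ of $\mathcal{N}_2$-events in $(t,t+s]$: by the infinitesimal description of a nonhomogeneous Poisson process, $\Pr[j=0]=1-\lambda(t)s+o(s)$, $\Pr[j=1]=\lambda(t)s+o(s)$ and $\Pr[j\ge 2]=o(s)$; given $j=0$ the increment of $\Hat{\mathcal{N}}$ is $0$, while given $j=1$ it equals an increment of $N_1$ over a unit-length interval, hence is $\mathrm{Poisson}(\nu)$-distributed (using the stationarity of $N_1$). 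Collecting the terms, for every $k\in\mathbb{N}$,
\[
\Pr[\Hat{\mathcal{N}}(t+s)-\Hat{\mathcal{N}}(t)=k]=\lambda(t)s\,e^{-\nu}\frac{\nu^{k}}{k!}+o(s)=\alpha(t)s\,\frac{\theta^{k}}{k!}+o(s),
\]
with $\alpha(t)=\lambda(t)e^{-\nu}$ and $\theta=\nu$, as required.

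The step that needs the most care is axiom~(2), independence of increments. I would fix $0\le t_0<t_1<\dots<t_n$ and set $A_i:=\mathcal{N}_2(t_i)-\mathcal{N}_2(t_{i-1})$; these are independent because $\mathcal{N}_2$ has independent increments. Conditionally on $(A_1,\dots,A_n)$, each $\Hat{\mathcal{N}}(t_i)-\Hat{\mathcal{N}}(t_{i-1})=N_1(\mathcal{N}_2(t_i))-N_1(\mathcal{N}_2(t_{i-1}))$ is the increment of $N_1$ over an interval of length $A_i$, and these intervals are disjoint; since $N_1$ has independent and stationary increments, the variables $\Hat{\mathcal{N}}(t_i)-\Hat{\mathcal{N}}(t_{i-1})$ are conditionally independent, each $\mathrm{Poisson}(\nu A_i)$. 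Writing $\Pr\big[\bigcap_{i}\{\Hat{\mathcal{N}}(t_i)-\Hat{\mathcal{N}}(t_{i-1})=k_i\}\big]=\mathbb{E}\big[\prod_{i}\Pr[\mathrm{Poisson}(\nu A_i)=k_i]\big]$ and using independence of the $A_i$ factors this expectation into the product of the marginal probabilities, which gives independence of the increments. (Alternatively, once the marginal law and independent increments are established, axiom~(3) also follows from the small-argument expansion $B_k(\varepsilon)=\varepsilon+O(\varepsilon^2)$ applied to the increment law $BT\big((m(t+s)-m(t))e^{-\nu},\nu\big)$; but the conditioning argument above is self-contained.) This completes the verification that $\{\Hat{\mathcal{N}}(t),t>0\}$ satisfies Definition~\ref{def:nonhomogeneous_bell_processes} with the asserted parameters.
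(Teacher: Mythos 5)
Your computation of the one-dimensional law is exactly the paper's proof of Theorem \ref{teo:poisson_composition_nonhomogeneous}: condition on $\mathcal{N}_{2}(t)=r$, use independence of the two processes, multiply and divide by $e^{-m(t)e^{-\nu}}$, apply \eqref{eq:dobinski_formula}, and compare the resulting mass function with Theorem \ref{teo:nonhomogeneos_bell_touchard}. Where you genuinely go beyond the paper is in verifying the axioms of Definition \ref{def:nonhomogeneous_bell_processes}: the paper stops once the marginal probability mass function matches, leaving the independence of increments and the infinitesimal condition (3) implicit, whereas you check them explicitly --- independence of increments by conditioning on the increments $A_i$ of $\mathcal{N}_2$ and noting that, given these, the corresponding increments of $N_1$ are taken over disjoint intervals and are conditionally independent Poisson$(\nu A_i)$ (stationarity of $N_1$ removing any dependence on the starting level $\mathcal{N}_2(t_0)$, so the factorization follows from independence of the $A_i$), and condition (3) by the one-event-or-none expansion of the inner nonhomogeneous Poisson process on $(t,t+s]$, which yields $\lambda(t)s\,e^{-\nu}\nu^{k}/k!+o(s)$. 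This also leads you to record the correct reading of the parameters: the rate function of the composed process is $\alpha(t)=\lambda(t)e^{-\nu}$ with mean function $m(t)e^{-\nu}$ and $\theta=\nu$, which the theorem statement expresses loosely as ``$\alpha:=m(t)e^{-\nu}$''. Both arguments are correct; yours is the more complete one, since agreement of one-dimensional marginals alone does not by itself establish that $\Hat{\mathcal{N}}$ satisfies Definition \ref{def:nonhomogeneous_bell_processes}, while the paper's shorter proof buys brevity by leaving the increment structure of the composition to the reader.
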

    \begin{proof}
        The proof follows the same fashion we used proving Theorem \ref{teo:poisson_composition}.
        Setting the following equality
        \begin{equation*}
            \Pr[\Hat{\mathcal{N}}(t)=k]=\sum_{r=0}^{\infty}\Pr[N_{1}(r)=k\lvert \mathcal{N}_{2}(t)=r]\Pr[\mathcal{N}_{2}(t)=r],
        \end{equation*}
        and by independence of the processes
        \begin{align*}
            \Pr[\Hat{\mathcal{N}}(t)=k]&=\sum_{r=0}^{\infty}\Pr[N_{1}(r)=k]\Pr[\mathcal{N}_{2}(t)=r],\\
                            &=\sum_{r=0}^{\infty}\frac{e^{-\nu r}(\nu r)^{k}}{k!}\frac{e^{-m(t)}m(t)^{r}}{r!},\\
                            &=\frac{e^{-m(t)}\nu^{k}}{k!}\sum_{r=0}^{\infty}\frac{e^{-\nu r}r^km(t)^r}{r!},
        \end{align*}
        multiplying the last expression by $e^{-m(t)e^{-\nu}}/e^{-m(t)e^{-\nu}}$ and using \eqref{eq:dobinski_formula}, we have that
        \begin{equation*}
             \Pr[\Hat{\mathcal{N}}(t)=k] = e^{-m(t)e^{-\nu}[e^{\nu}-1]}\frac{\nu^{k}}{k!}B_{k}(m(t)e^{-\nu}), \qquad k \geq0,t>0,
        \end{equation*}
        where $m(t) := \int_{0}^{t}\lambda(s)ds$. Comparing the last expression with Theorem \ref{teo:nonhomogeneos_bell_touchard} finishes the proof.
        
    \end{proof}
\begin{definition}
Let $\{N(t), t \geq 0\}$ be a Bell-Touchard process and let $\alpha$ be a positive random variable with density function given by $\ell(\alpha)$. Now, conditional on $\alpha$, the process is called a $\textit{conditional}$ or $\text{mixed}$  Bell-Touchard process with probability function of $N(t)$ given by:
\begin{equation}
\begin{aligned}[b]
    \Pr[N(t)=n]&=
    \int_{0}^{\infty}e^{-\alpha t(e^{\theta}-1)}\frac{\theta^n}{n!}B_{n}(\alpha t)\ell(\alpha)d\alpha
    \end{aligned}
\end{equation}
\end{definition}
\begin{proposition}
    If $\{N(t), t \geq 0\}$ is a conditional Bell-Touchard process and $\alpha\thicksim \text{Exp}(\gamma)$ then 
    \begin{equation}
        \Pr[N(t)=n]=\frac{\theta^n\gamma}{n!z} \text{Li}_{-n}(\frac{t}{z}),
    \end{equation}
where $z=[te^{\theta}+\gamma]$ and $\text{Li}_{-n}(t/z)$ is the polylogarithm function.
\end{proposition}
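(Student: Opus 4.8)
The plan is to start from the defining integral for the conditional Bell-Touchard process, substitute the exponential density $\ell(\alpha) = \gamma e^{-\gamma\alpha}$, and exploit Dobinski's formula \eqref{eq:dobinski_formula} to turn the Bell polynomial $B_n(\alpha t)$ into a series that can be integrated term by term. Concretely, I would write
\[
    \Pr[N(t)=n] = \frac{\theta^n\gamma}{n!}\int_{0}^{\infty} e^{-\alpha t(e^{\theta}-1)} e^{-\alpha t} \left(\sum_{k=0}^{\infty}\frac{k^n}{k!}(\alpha t)^k\right) e^{-\gamma\alpha}\, d\alpha,
\]
where I have used $e^{-\alpha t(e^{\theta}-1)}B_n(\alpha t) = e^{-\alpha t(e^{\theta}-1)} e^{-\alpha t}\sum_{k\ge 0} k^n (\alpha t)^k/k!$. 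Collecting the exponential factors gives $e^{-\alpha(t e^{\theta} + \gamma)} = e^{-\alpha z}$ with $z := t e^{\theta} + \gamma$, so after interchanging sum and integral (justified by nonnegativity of all terms, Tonelli),
\[
    \Pr[N(t)=n] = \frac{\theta^n\gamma}{n!}\sum_{k=0}^{\infty}\frac{k^n t^k}{k!}\int_{0}^{\infty}\alpha^k e^{-\alpha z}\, d\alpha
    = \frac{\theta^n\gamma}{n!}\sum_{k=0}^{\infty}\frac{k^n t^k}{k!}\cdot\frac{k!}{z^{k+1}}
    = \frac{\theta^n\gamma}{n! z}\sum_{k=0}^{\infty} k^n \left(\frac{t}{z}\right)^{k}.
\]

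The final step is to recognize the remaining sum as the polylogarithm. By definition $\text{Li}_{-n}(w) = \sum_{k=1}^{\infty} k^n w^k$ for $|w|<1$ (and the $k=0$ term vanishes since $n\ge 1$, or contributes nothing to the defining series when $n=0$ with the convention $0^0=1$ handled separately if needed), so with $w = t/z$ we obtain $\Pr[N(t)=n] = \dfrac{\theta^n\gamma}{n! z}\,\text{Li}_{-n}(t/z)$, which is the claimed identity.

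The one point that needs a little care — and the main (minor) obstacle — is the convergence of the series $\sum_k k^n (t/z)^k$: this requires $t/z < 1$, i.e. $t < t e^{\theta} + \gamma$, which holds automatically since $\gamma > 0$ and $e^{\theta} > 1$ for $\theta > 0$; so the polylogarithm series converges for every admissible $(t,\theta,\gamma)$ and the term-by-term integration is legitimate. I would also note in passing that summing over $n$ recovers total mass $1$, or simply remark that the result is consistent with $\Pr[N(t)=0] = \int_0^\infty e^{-\alpha t(e^\theta-1)}\gamma e^{-\gamma\alpha}d\alpha = \gamma/(t(e^\theta-1)+\gamma)$, matching $\frac{\gamma}{z}\text{Li}_0(t/z) = \frac{\gamma}{z}\cdot\frac{t/z}{1-t/z} + \frac{\gamma}{z}$ after the appropriate bookkeeping for the $n=0$ case.
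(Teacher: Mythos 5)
Your proposal is correct and follows essentially the same route as the paper: expand $B_n(\alpha t)$ via Dobinski's formula \eqref{eq:dobinski_formula}, collect the exponentials into $e^{-\alpha z}$ with $z=te^{\theta}+\gamma$, integrate term by term to get $k!/z^{k+1}$, and identify the resulting series with $\mathrm{Li}_{-n}(t/z)$. Your added remarks on convergence ($t/z<1$ automatically) and on the $n=0$ bookkeeping are sensible refinements the paper leaves implicit.
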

\begin{proof}
\begin{equation*}
\begin{aligned}[b]
    \Pr[N(t)=n]&=\int_{0}^{\infty}e^{-\alpha t(e^{\theta}-1)}\frac{\theta^n}{n!}B_{n}(\alpha t)\gamma e^{-\gamma\alpha}d\alpha\\
    &=\frac{\theta^n}{n!}\gamma\int_{0}^{\infty}e^{-\alpha t(e^{\theta}-1)-\gamma\alpha}B_{n}(\alpha t)d\alpha\\
    &=\frac{\theta^n}{n!}\gamma\int_{0}^{\infty}e^{-\alpha t(e^{\theta}-1)-\gamma\alpha}e^{-\alpha t}\sum_{k=0}^{\infty}\frac{k^n}{k!}(\alpha t)^k d\alpha\\
    &=\frac{\theta^n}{n!}\gamma\sum_{k=0}^{\infty}\frac{k^nt^k}{[te^{\theta}+\gamma]^{k+1}}\\
    &=\frac{\theta^n\gamma}{n!z} \text{Li}_{-n}(\frac{t}{z}),
\end{aligned}
\end{equation*}
writing $z=[te^{\theta}+\gamma]$ and using $\text{Li}_{-n}(t/z)$.
\end{proof}
\section{Numerical Results}

As it is well-known, stochastic processes are useful for the modeling of different phenomena. A way to do it is, after a suitable estimation of parameters, to perform simulations of the resulting stochastic process to predict the behavior of the phenomenon of interest. In what follows, we have the algorithm for the simulation of the Bell-Touchard process, which follows from the Proposition \ref{prop: bell_compound_poisson_family_member}. Consider $S_j\thicksim \text{ZTP}(\theta)$, ${\tau_j \thicksim \text{Exp}(\alpha(e^{\theta}-1))}$ and $\delta_n:=\sum_{i=0}^{n}\tau_i$, where $\tau_0=0 \text{ and } j \in \{0,1,2\dots\}$.

\SetKwComment{Comment}{/* }{ */}
\SetKwRepeat{Do}{do}{while}
\begin{algorithm}
\caption{Bell-Touchard process simulation}\label{alg:two}
\KwData{$\alpha, \theta \geq0$}
\KwResult{The random vectors T and P}
$\tau_0, \delta_0, S_0, i \gets 0,0,0,0$\;
Declare Arrays $T[0]\gets0 \text{ and } P[0]\gets0$\;
$\delta \gets \text{time length}$\;
 \Do{$\delta_i \leq \delta$}{
        $i\gets i+1$\;
        Generate $\tau_i$\;
        $\delta_{i} \gets \delta_{i-1}+\tau_i$\;
        \eIf{$\delta_i > \delta$}{
            Stop;
        }
        {
        $\text{T}[i] \gets \delta_i $\;
        Generate $S_i$\;
        $\text{P}[i] \gets S_i$\; 
        }
    }
   
\end{algorithm}


In order to illustrate the applicability of the Bell-Touchard process we consider the three data sets explored in \cite[Sections~4-5]{castellares2020two}. These data sets correspond, respectively, to: (1) the number of automobile insurance claims per policy over a fixed period; (2) the number of accidents of workers in a particular division of a large steel corporation within six months; and (3) the number of chromatid aberrations in 24 hours (see \cite{castellares2020two} and the references therein). It is not difficult to see that these three phenomena can be represented by counting processes. Moreover, since the Bell-Touchard distribution was fitted to these data, with the parameter estimates presented in \cite[Table~4]{castellares2020two}, we can use the Bell-Touchard process to simulate them. Thus, we shall take the parameter estimates and we run the respective path simulations of the Bell-Touchard process for each data set. Let $\Hat{\alpha} = (0.1760,0.2993, 0.4450)$ and $\Hat{\theta}=(0.3472,1.2667,0.6453)$, where $(\Hat{\alpha}_i,\Hat{\theta}_i)$ corresponds to the parameters estimates obtained by \cite{castellares2020two} of the i$^{\text{th}}$ data set, for $i\in\{1,2,3\}$. Before presenting the results we consider some additional definitions.


\begin{definition}
        Let $\{Y_{i}\}_{i=1}$ be a sequence of i.i.d. random variables where $Y_{\ell} \thicksim \Gamma(\eta,\beta)$ for all $\ell \in \mathbb{N}$ and ${\{N(t),t\geq0\}}$ a Bell-Touchard process with parameters $(\alpha,\theta)$. Then
        \begin{equation}
            \mathcal{L}(t) = \sum_{\ell=1}^{N(t)}Y_{\ell}, \text{ } t\geq 0
            \label{eq:bell_compound_processes}
        \end{equation}
        is a compound Bell-Touchard process with $\mathbb{E}[\mathcal{L}(t)] = \alpha t\theta e^{\theta}\frac{\eta}{\beta}$.
        \label{def:bell_compounding_processes}
\end{definition}

Now, consider the modification of the classic surplus process\cite{asmussen2010ruin}:

\begin{equation}
    \mathcal{R}_{t} = u+\rho t-\mathcal{L}(t),
\end{equation}
and define 
\begin{equation*}
    \rho_{\varepsilon} := (1+\varepsilon)\alpha \theta e^{\theta}\frac{\eta}{\beta}   
\end{equation*}
where $\varepsilon\geq0$ is the safety loading. As a result, one has 
\begin{equation*}
    \mathbb{E}[\mathcal{R}_t] = u+\varepsilon\alpha \theta e^{\theta}\frac{\eta}{\beta}.
\end{equation*}
Therefore, we have the following process
\begin{equation}
    \mathcal{R}_t = u+\rho_{\varepsilon}t-\mathcal{L}(t),
    \label{eq:ruin_modification}
\end{equation}
whose the number of claims follows the Bell-Touchard process.

By using the Algorithm \ref{alg:two} one can simulate the paths of the Bell-Touchard process for each of the data sets mentioned before. Furthermore, we generate the simulation for the process \eqref{eq:ruin_modification} using the parameter estimates of the first data set mentioned (see Figure \ref{label2}). Looking at Figure \ref{label1}, one can see the path of the Bell-Touchard process corresponding to the automobile claims data set (1). 
\begin{figure}[htb]
\centering
\hspace*{1.5em}\raisebox{\dimexpr-.5\height-1em}
  {\includegraphics[scale=0.45]{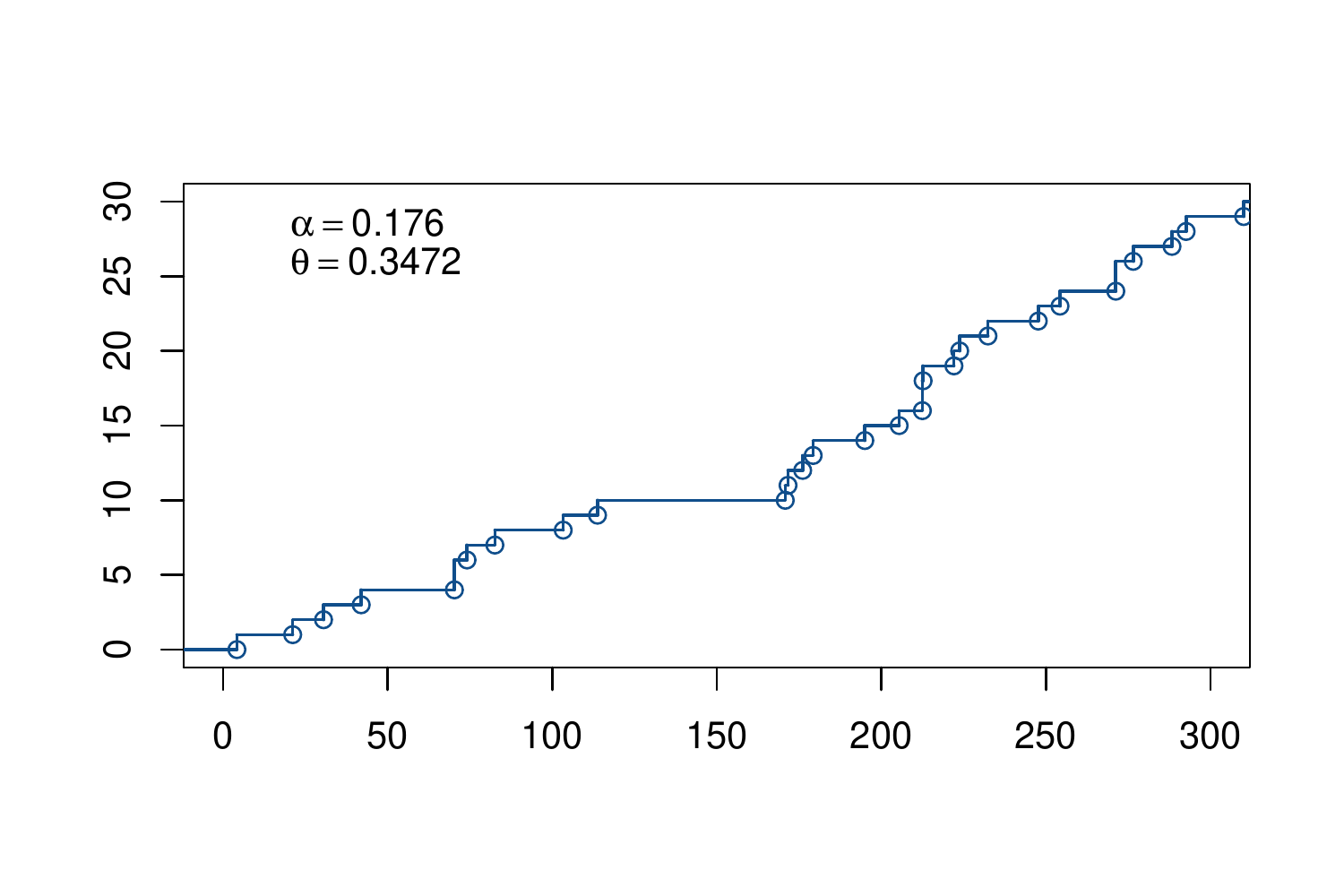}}%
\subfloat[\label{label1}]{\hspace{1.5em}}
\hspace*{1.5em}\raisebox{\dimexpr-.5\height-1em}
  {\includegraphics[scale=0.45]{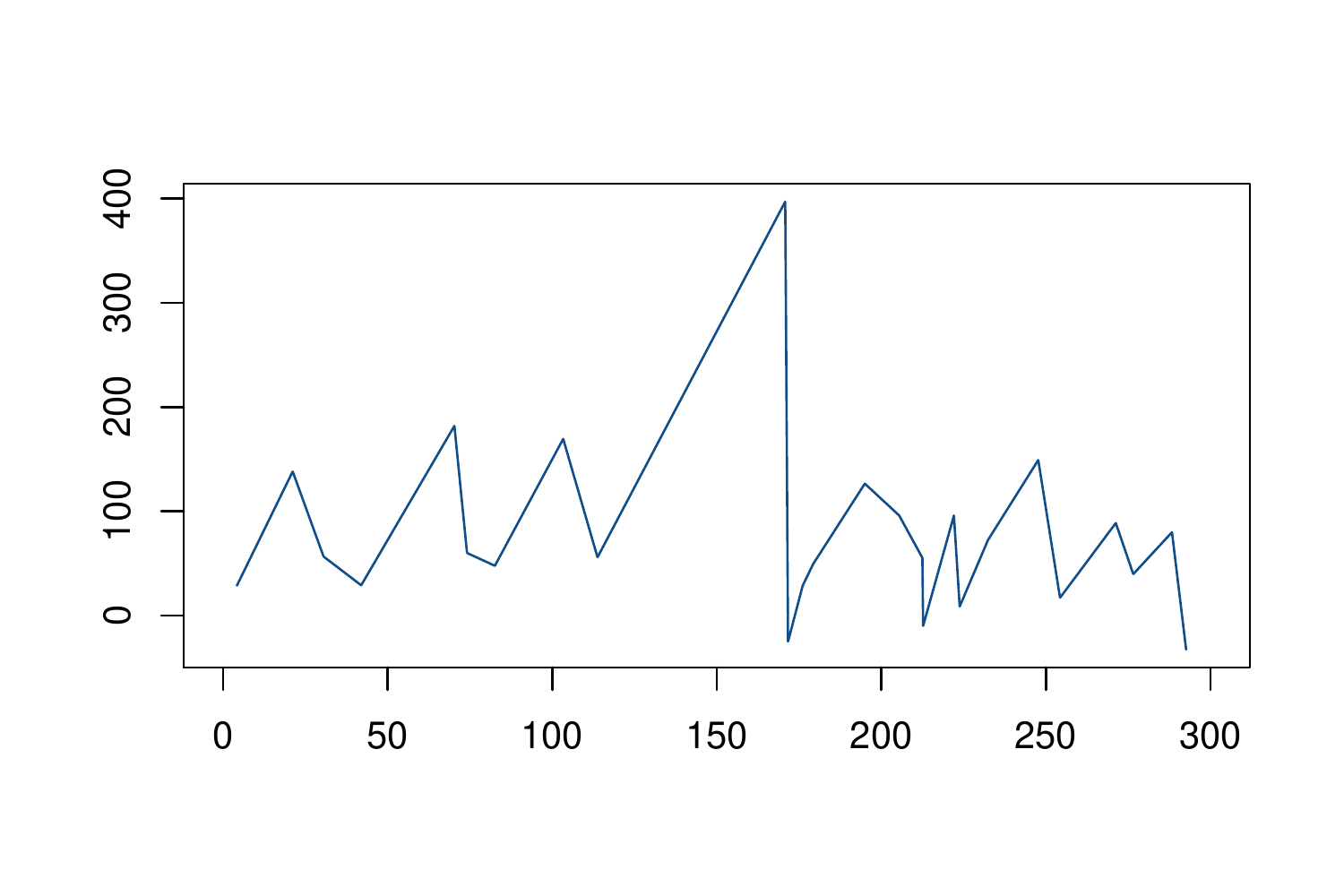}}%
\subfloat[\label{label2}]{\hspace{1.5em}}

\leavevmode\smash{\makebox[0pt]{\hspace{-37em}
  \rotatebox[origin=l]{90}{\hspace{4.5em}
    $N(t)$}%
}}
\leavevmode\smash{\makebox[0pt]{\hspace{2em}
  \rotatebox[origin=l]{90}{\hspace{4.5em}
    $R(t)$}%
}}

Time - $t$

\medskip

\caption{The Bell-Touchard process and modified risk process for data set (1). 
}
\end{figure}

The path in the Figure \ref{label2} depends on the path of Figure \ref{label1} and the intensity of the claims amount pictured in Figure \ref{label3}. Inside the Figure \ref{fig:paths} one can see two-path simulations for both data sets 1 and 2.

\begin{figure}[htb]
\centering
\hspace*{1.5em}\raisebox{\dimexpr-.5\height-1em}
  {\includegraphics[scale=0.60]{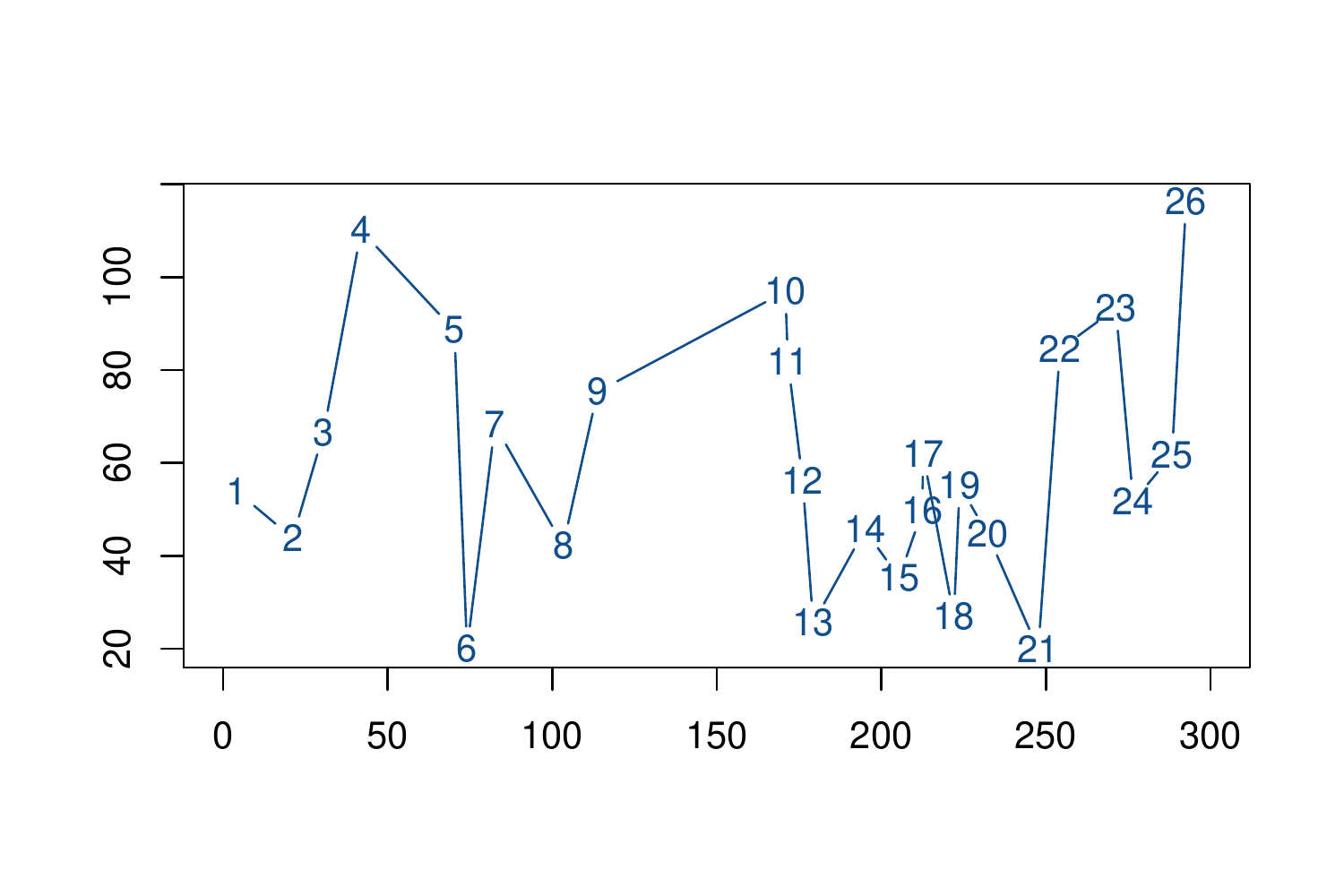}}%
\subfloat[\label{label3}]{\hspace{1.5em}}

\leavevmode\smash{\makebox[0pt]{\hspace{18em}
  \rotatebox[origin=l]{90}{\hspace{4em}
    Claim Amount}%
}}\hspace{0pt plus 1filll}\null

Time - $t$


\caption{The claim amount per occurrence throughout the modified process in Figure \ref{label2}. 
}
\label{fig:claim_amount}
\end{figure}

\begin{figure}[htb]
\centering
\hspace*{1.5em}\raisebox{\dimexpr-.45\height-1em}
  {\includegraphics[scale=0.45]{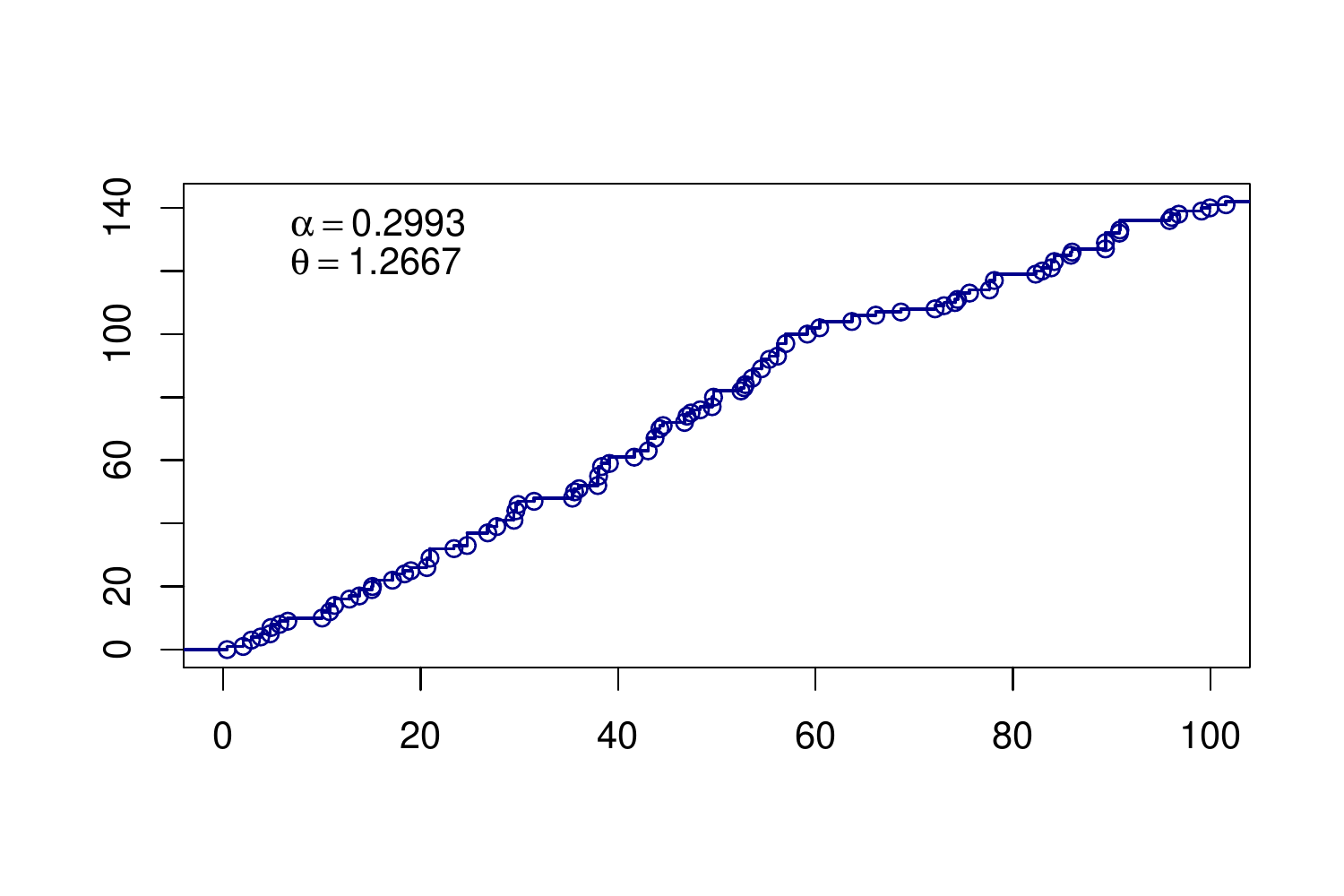}}%
\subfloat[\label{label11}]{\hspace{1.5em}}
\hspace*{1.5em}\raisebox{\dimexpr-.45\height-1em}
  {\includegraphics[scale=0.45]{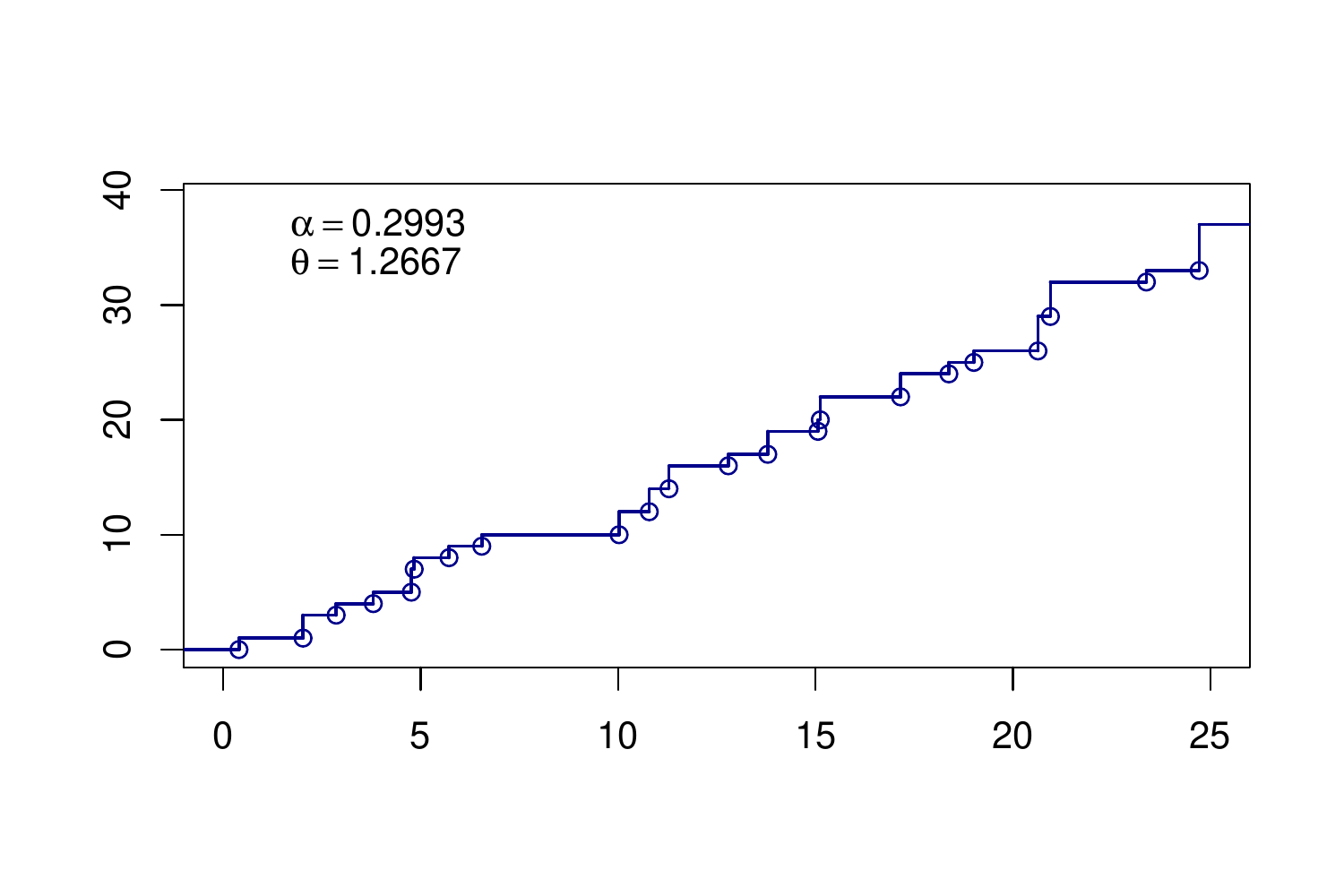}}%
\subfloat[\label{label22}]{\hspace{1.5em}}
\\[\smallskipamount]
\hspace*{1.5em}\raisebox{\dimexpr-.45\height-1em}
  {\includegraphics[scale=0.45]{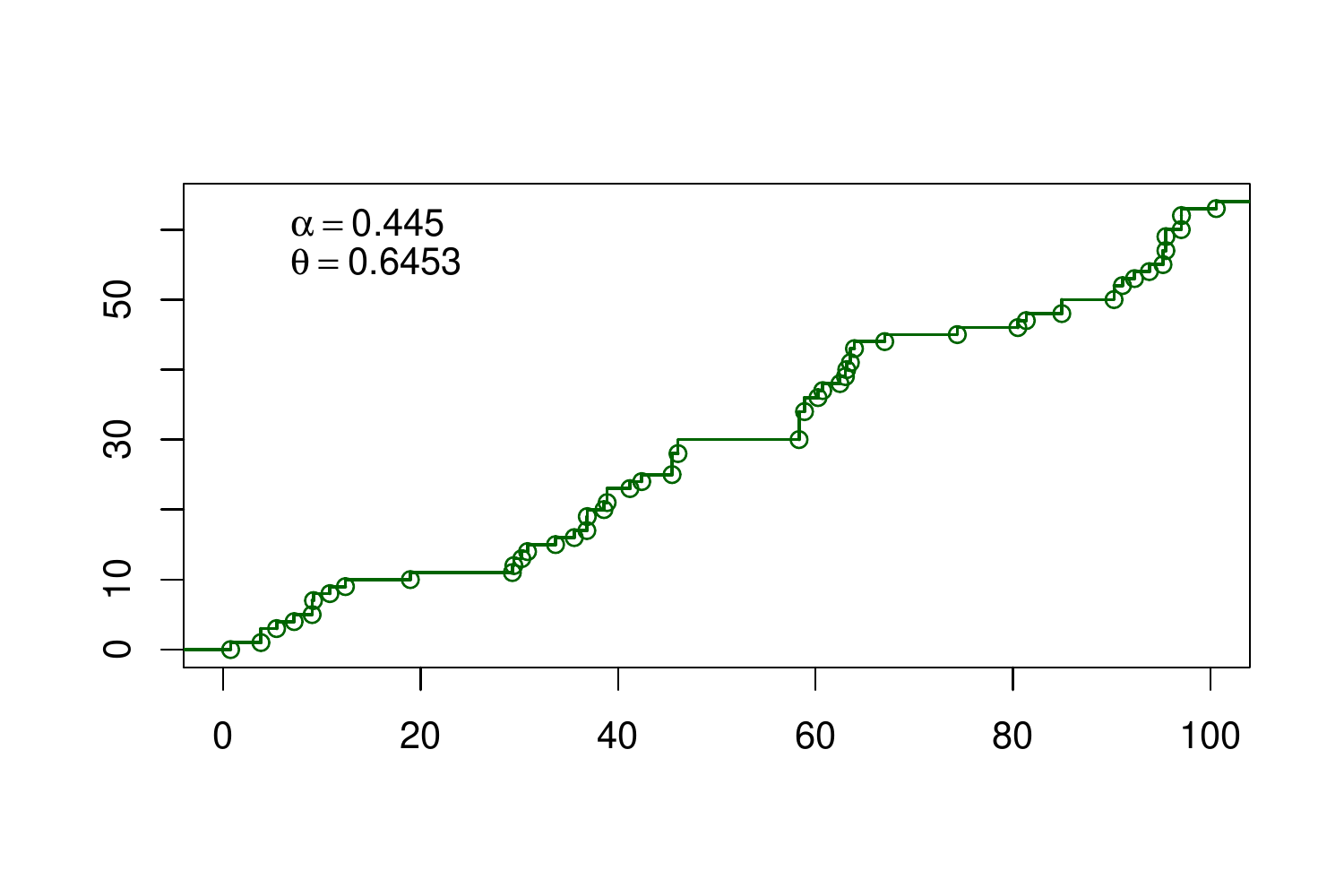}}%
\subfloat[\label{label12}]{\hspace{1.5em}}
\hspace*{1.5em}\raisebox{\dimexpr-.45\height-1em}
  {\includegraphics[scale=0.45]{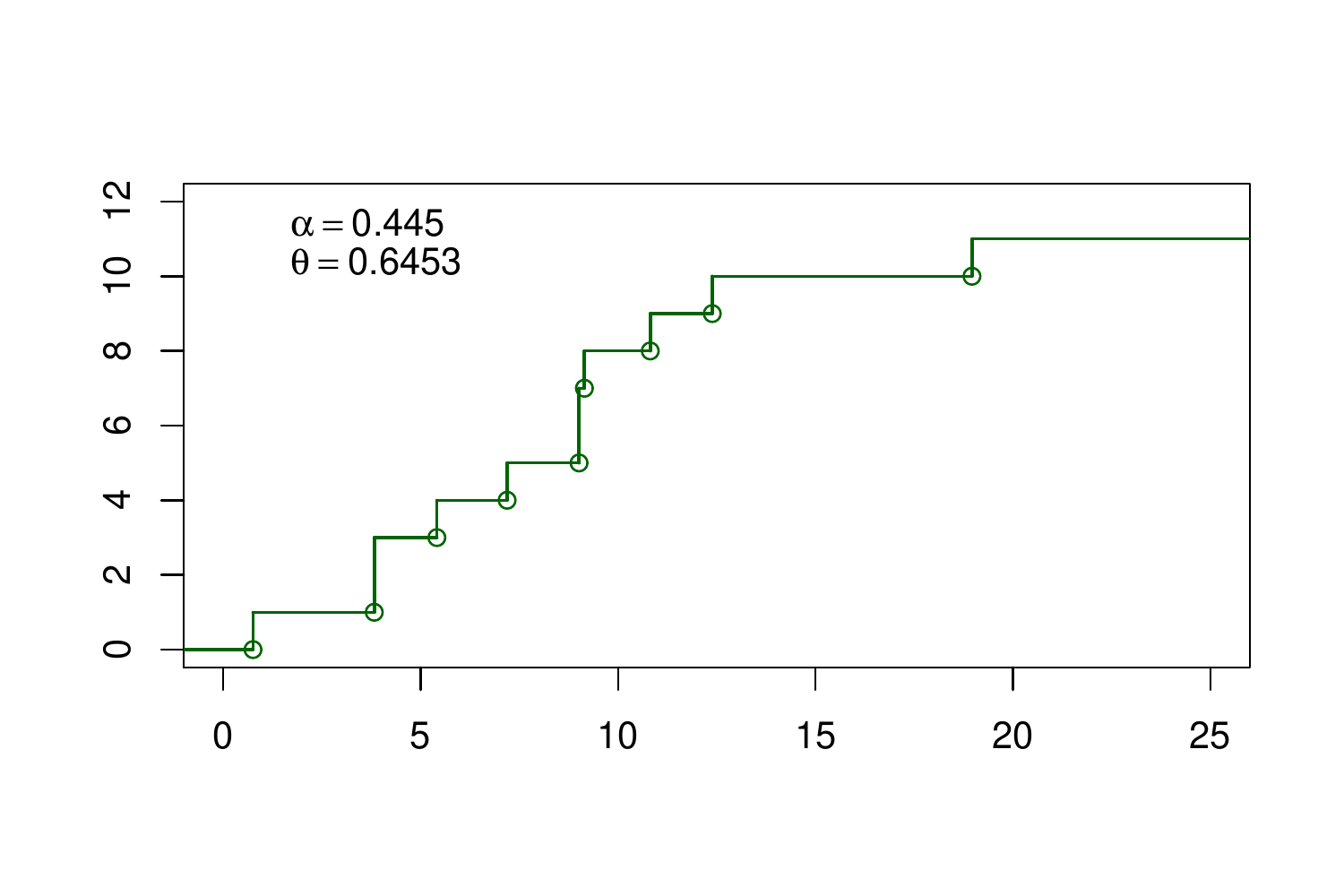}}%
\subfloat[\label{label21}]{\hspace{1.5em}}

\leavevmode\smash{\makebox[0pt]{\hspace{2em}
  \rotatebox[origin=l]{90}{\hspace{10em}
    $N(t)$}%
}}\hspace{0pt plus 1filll}\null

Time - $t$

\medskip

\caption{The Bell-Touchard process path simulation for data sets (2) and (3). 
}
\label{fig:paths}
\end{figure}



\section{Concluding Remarks}
We have proposed a counting process based on the Bell-Touchard distribution, called the Bell-Touchard process. To reach that, we have explored some relevant properties of the Bell polynomials and the Bell-Touchard probability distribution. Moreover, we have shown this new process is a member of the multiple Poisson process family and a compound Poisson process. In addition, we have found that the Bell-Touchard process is closed for superposition and decomposition operations. However, despite the Bell-Touchard process being closed for decomposition procedures, the resulting processes are non-independent Bell-Touchard processes. Furthermore, we have found that the iterated Poisson process is a Bell-Touchard process. In addition, we have proposed a generalization of the process, naming it the nonhomogeneous Bell-Touchard process. We had shown that a nonhomogeneous Bell-Touchard process results from the composition of Poisson processes. In this case, however, the inside process is a nonhomogeneous Poisson process.

One can apply the Bell-Touchard process in many situations, such as risk modelling of catastrophic events, queue theory and ruin theory. In principle, any circumstance demanding the generalization of the Poisson process to provide any size jumps with a mathematically tractable underlying probability distribution could be a suitable application. An application in ruin theory could be generalizing the classic risk process, changing the compound Poisson process by a sum of positive random variables with the number of variables summed up following a Bell-Touchard process. In our work we appeal to three data sets from literature to illustrate the applicability of our process. 

In the nonhomogeneous Bell-Touchard process, we set the parameter $\alpha(t)$ for $t\geq0$ as a function depending on time. Considering that, as future research, we suggest setting as parameter the function $\theta(t)$, for $t\geq0$. In this case, the resulting process would have jumps intensity driven by a function $\theta:t\rightarrow [0,\infty)$. An advantage of this approach would be finding a process whose jumps size was not stationary. In addition, considering the modified risk process previous mentioned, finding an upper bound for the ruin probability concerning it, maybe through a modification of the Lundberg bound,  would be an interesting research problem.

\section{Acknowledgements}

This study was financed in part by the Coordena\c{c}\~ao de Aperfei\c{c}oamento de Pessoal de N\'ivel Superior - Brasil (CAPES) - Finance Code 001. The second author thanks also FAPESP (grant 17/10555-0). The authors thank Elcio Lebensztayn (UNICAMP) and Valdivino Vargas Junior (UFG) for fruitful discussions. 
\printbibliography
\end{document}